\documentclass[preprint,3p]{elsarticle}
\usepackage{geometry,graphicx,xcolor,amsmath,amssymb,bm, amsthm,mathtools,stanli,pgfplots,subcaption,natbib}
\usepackage[colorlinks]{hyperref}
\pgfplotsset{compat=1.18}

\title{Minimum-weight frames under subresonant harmonic excitation: Robust constraints on dynamic compliance, peak input power, and eigenfrequencies}
\author{Marek Tyburec}
\author{Marouan Handa}
\author{Jan Havelka}
\author{Jan Zeman}
\journal{}
\newtheorem{lemma}{Lemma}
\newtheorem{proposition}{Proposition}
\newtheorem{definition}{Definition}

\DeclareMathOperator*{\argmin}{arg\,min}

\newcommand{\T}{^{\mathsf T}}
\newcommand{\TC}{^{\mathsf H}}
\newcommand{\nel}{n_\mathrm{e}}
\newcommand{\ndof}{n_\mathrm{dof}}
\newcommand{\dK}{d_\mathrm{K}}
\newcommand{\A}{\mathrm{A}}
\newcommand{\R}{\mathrm{R}}
\newcommand{\Real}{\mathrm{Re}}
\newcommand{\I}{\mathrm{Im}}
\newcommand{\range}[1]{\ensuremath{\mathcal{R}\!\left(#1\right)}}
\newcommand{\kernel}[1]{\ensuremath{\mathcal{N}\!\left(#1\right)}}

\numberwithin{equation}{section}

\begin{document}

\begin{abstract}
This work addresses minimum-weight design of undamped Euler--Bernoulli frame structures under subresonant single-frequency harmonic excitations, focusing on (robust) dynamic compliance and (robust) peak input power with ellipsoidal load uncertainty. We develop a semidefinite reformulation of robust dynamic compliance for subresonant single‑frequency excitation and prove its equivalence to robust peak input power. We show that both these response measures admit an exact reformulation as a free-vibration eigenvalue constraint with design-independent mass augmentation, unifying static, dynamic, and modal requirements. Despite the nonconvex polynomial dependence on cross-sectional areas, certified bounds on global minimizers are obtained via the moment-sum-of-squares hierarchy of semidefinite relaxations. Benchmark studies on 10- and 35-segment frames corroborate the theory. For the 10-segment problem, we obtain the global optimum; for the 35-segment frame, we find high-quality locally-optimal designs that substantially improve on the best known one. We further fabricate and experimentally validate an additional design that closely matches the predictions of the model.
\end{abstract}

\begin{keyword}
Dynamic compliance \sep peak input power \sep frame topology optimization \sep free-vibration eigenvalue \sep robust optimization \sep polynomial optimization
\end{keyword}

\maketitle

\section{Introduction}

Designing minimum-weight frame structures that remain safe and efficient under dynamic excitations is a central challenge in structural optimization \citep{Jog2002,Niu2017,Silva2019}. Beyond classical static constraints, engineers need to control response quantities driven by harmonic loads, such as dynamic compliance (maximum instantaneous force-displacement product) \citep{Olhoff2014} and power (maximum instantaneous force-velocity product) \citep{Heidari2009optimization}. Both quantities govern vibration levels, fatigue, and energy transfer, and thus strongly influence weight-optimal designs.

\subsection{Topology optimization under dynamic constraints}

For truss topology optimization using ground structure discretization, convex formulations for static compliance \citep{BenTal2001}, multi-load robustness \citep{BenTal1997, Kanno2015}, and eigenfrequency constraints \citep{Achtziger2008} have been developed, enabling reliable global optimization via interior-point methods.

For harmonic loads, the use of energy-based and power-based metrics as objective functions in dynamic topology optimization is well-established. A seminal contribution by \citet{Jog2002} proposed optimizing the average input power, a global measure that effectively reduces vibrations by shifting natural frequencies away from the excitation frequency. Another common objective is the minimization of dynamic compliance \citep{Olhoff2014}, which quantifies the maximum displacement response under harmonic loading. However, the dynamic compliance functional has a significant drawback \citep{Silva2019}: for excitation frequencies above the first resonance, optimization converges to antiresonances that are typically associated with poorly defined topologies. \citet{Silva2020} demonstrated that the use of the active input power, the real part of the complex input power, is more robust because its frequency spectrum is free of such antiresonances.

For peak input power under harmonic loads, \citet{Heidari2009optimization} showed that when (i) all nodal forces are in phase and (ii) the driving frequency stays below the first resonance, the peak input power minimization problem admits a clean semidefinite programming (SDP) reformulation; they also derived the worst-case formulation over unit-magnitude forces that remains SDP-representable. Building on this, \citet{Ma2025} recently developed SDP-representable relaxations for general (possibly out-of-phase, multi-frequency) harmonic loads acting below the first resonance using semidefinite-representable functions and nonnegativity of trigonometric polynomials. Their approach reduces to Heidari's in the in-phase/single-frequency case. However, extension to loads above the first resonance remains an open problem.

In contrast to trusses, whose stiffness is linear in the element areas $a_e$, the Euler--Bernoulli frame stiffness is polynomial in the cross-sectional parameters because the bending stiffness involves the second moment of area $I_e$ \citep[Appendix A]{Tyburec2024global}; with selfsimilar scaling $I_e \propto a_e^2$, while one-direction scaling (e.g., changing height only) gives $I_e \propto a_e^3$. Consequently, free‑vibration–constrained weight minimization is nonconvex and may have feasible sets disconnected due to local resonance of slender members \citep{Ni2014}, and repeated eigenvalues \citep{Achtziger2008}. Relaxation strategies and nonlinear SDP methods have been proposed to navigate this landscape \citep{Yamada2015,Tyburec2024global}. Recent advances show that such frame problems can nevertheless be tackled \emph{globally} via polynomial optimization: the problem is cast as a polynomial SDP over a compact semialgebraic set and then solved by a hierarchy of moment-sum-of-squares (mSOS) relaxations that produce monotone lower bounds and, with suitable constructions, matching feasible upper bounds and certificates of global $\varepsilon$-optimality \citep{Tyburec2024global}.

\subsection{Aims and contributions}
Motivated by the above, we develop a unified treatment of dynamic compliance and peak input power constraints for undamped frame structures under single-frequency, uniform-phase subresonant harmonic loads. Our contributions are:

\begin{itemize}
  \item \textbf{Unified semidefinite reformulations.} We derive SDP constraints for (robust) dynamic compliance and (robust) peak input power under ellipsoidal load uncertainty and show that both are \emph{equivalent} to a free-vibration eigenvalue constraint with a modified, design-independent mass term. This puts dynamic objectives in the same class as standard free-vibration eigenvalue constraints.
  \item \textbf{Mode-worst-case equivalence.} We prove that the worst-case displacement/velocity amplitudes associated with the robust dynamic compliance and peak input power constraints coincide with the fundamental eigenmode of the equivalent free-vibration problem, which clarifies the structure of worst-case states and links the three performance measures.
  \item \textbf{Global optimization pipeline.} Because the equivalent constraint has the free-vibration eigenvalue form, we can deploy the global polynomial-optimization framework for frames (mSOS hierarchy on an Archimedean set with a non-mixed-term monomial basis as presented in \citep{Tyburec2024global}) to obtain certified lower/upper bounds and, in small benchmarks, global optimality certificates.
  \item \textbf{Continuity to the static case.} Taking $\omega\to 0$ in the SDP formulations recovers the classical robust static compliance constraint of \citet{BenTal1997}, providing a continuous bridge from dynamic compliance and peak input power to robust static compliance.
\end{itemize}

On trusses, our peak input power constraints reduce to the known convex SDP form of \citet{Heidari2009optimization} in the single-frequency, in-phase case (and connect to \citet{Ma2025} in the broader multi-frequency setting). On frames, the key novelty is to embed these dynamic constraints into a free-vibration eigenvalue surrogate with a design-independent mass augmentation, which unlocks global solution strategies previously developed for frame eigenvalue problems \citep{Tyburec2024global}.

The paper is organized as follows. Section~\ref{sec:prelim} introduces notations and preliminaries. Section~\ref{sec:freevib} reviews frame optimization under free-vibration eigenvalue and static-compliance constraints, and Section~\ref{sec:moment} provides a moment-based formulation. We then derive semidefinite programs for dynamic compliance and peak input power (Sections~\ref{sec:dyncompl} and \ref{sec:peakpower}) and prove their equivalence to free-vibration eigenvalue constraints, together with the state fields relations (Section~\ref{sec:relation}). Section~\ref{sec:examples} reports numerical results on two benchmarks ($10$- and $35$-segment frames): we certify global optimality for the former and obtain locally-optimal designs for the latter that are significantly lighter than the state of the art; and we further present an additional design with a small optimality gap, manufactured additively and validated experimentally via modal testing, showing excellent agreement with the model predictions. We conclude in Section~\ref{sec:conclusions}.

\subsection{Notation and preliminaries}\label{sec:prelim}

We use the following notation. We write vectors in bold lowercase (e.g., $\mathbf{u}$) and matrices in bold uppercase (e.g., $\mathbf{K}$). Components use subscripts: $u_i$ and $K_{ij}$. The transpose and conjugate (Hermitian) transpose are written as $(\cdot)\T$ and $(\cdot)\TC$, respectively. For $n\in\mathbb{N}$, $\mathbb{S}^n$ is the space of real symmetric $n\times n$ matrices; $\mathbf{A}\succeq\mathbf{B}$ abbreviates $\mathbf{A}-\mathbf{B}\in\mathbb{S}^n_{\succeq 0}$ (the cone of positive semidefinite matrices). The range and kernel of $\mathbf{A}$ are $\range{\mathbf{A}}$ and $\kernel{\mathbf{A}}$. The Moore-Penrose pseudoinverse $\mathbf{A}^\dagger$ is the unique matrix defined by:

\begin{definition}[Moore-Penrose pseudoinverse \textup{\citep{Boyd2004}}]\label{def:pseudo-inverse}
Let $\mathbf{A}\in\mathbb{R}^{m\times n}$. The pseudoinverse $\mathbf{A}^\dagger$ is the unique matrix satisfying
$$
\mathbf{A}\mathbf{A}^\dagger\mathbf{A}=\mathbf{A},\quad
\mathbf{A}^\dagger\mathbf{A}\mathbf{A}^\dagger=\mathbf{A}^\dagger,\quad
(\mathbf{A}\mathbf{A}^\dagger)\T=\mathbf{A}\mathbf{A}^\dagger,\quad
(\mathbf{A}^\dagger\mathbf{A})\T=\mathbf{A}^\dagger\mathbf{A}.
$$
\end{definition}
Furthermore, we recall the Schur complement and its generalization:
\begin{lemma}[Schur complement \textup{\citep[Theorem~1.12]{Horn2005}}]\label{lem:schur_complement}
Let $\mathbf{M}=\begin{psmallmatrix}\mathbf{A}&\mathbf{B}\T\\ \mathbf{B}&\mathbf{C}\end{psmallmatrix}\in\mathbb{S}^{m+n}$. If $\mathbf{C}\succ\mathbf{0}$, then
$$
\mathbf{M}\succeq\mathbf{0}\ \Longleftrightarrow\ \mathbf{A}-\mathbf{B}\mathbf{C}^{-1}\mathbf{B}\T\succeq\mathbf{0}.
$$
\end{lemma}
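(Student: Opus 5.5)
The plan is to prove both directions of the equivalence with a congruence transformation that block-diagonalizes $\mathbf{M}$, using that $\mathbf{C}\succ\mathbf{0}$ is invertible. Define
$$
\mathbf{T}=\begin{pmatrix}\mathbf{I} & -\mathbf{B}\T\mathbf{C}^{-1}\\ \mathbf{0} & \mathbf{I}\end{pmatrix},
$$
which is nonsingular, being block upper triangular with identity diagonal blocks. A direct block multiplication, using the symmetry of $\mathbf{C}^{-1}$, will give
$$
\mathbf{T}\mathbf{M}\mathbf{T}\T=\begin{pmatrix}\mathbf{A}-\mathbf{B}\T\mathbf{C}^{-1}\mathbf{B} & \mathbf{0}\\ \mathbf{0} & \mathbf{C}\end{pmatrix}.
$$

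A dimension check is needed at this point. With the stated block layout, $\mathbf{B}$ is $n\times m$, so the expression that fits is $\mathbf{A}-\mathbf{B}\T\mathbf{C}^{-1}\mathbf{B}$. I read the statement's $\mathbf{B}\mathbf{C}^{-1}\mathbf{B}\T$ as this quantity under the transposed convention, and I will state that convention explicitly in the proof.

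The key fact is that positive semidefiniteness is invariant under congruence by a nonsingular matrix. For any $\mathbf{x}$, $\mathbf{x}\T\mathbf{T}\mathbf{M}\mathbf{T}\T\mathbf{x}=\mathbf{y}\T\mathbf{M}\mathbf{y}$ with $\mathbf{y}=\mathbf{T}\T\mathbf{x}$. Because $\mathbf{T}\T$ is bijective, $\mathbf{y}$ ranges over all vectors as $\mathbf{x}$ does. Hence $\mathbf{M}\succeq\mathbf{0}$ holds if and only if the block-diagonal matrix is positive semidefinite.

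A block-diagonal symmetric matrix is positive semidefinite if and only if each diagonal block is. The quadratic form splits as $\mathbf{x}_1\T(\mathbf{A}-\mathbf{B}\T\mathbf{C}^{-1}\mathbf{B})\mathbf{x}_1+\mathbf{x}_2\T\mathbf{C}\mathbf{x}_2$. Testing with $\mathbf{x}_2=\mathbf{0}$ gives one direction, and the sum of two nonnegative terms gives the other. Since $\mathbf{C}\succ\mathbf{0}$ by hypothesis, the second block imposes no condition, and the equivalence follows.

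An alternative route, which I would use as a cross-check, is to minimize the quadratic form over $\mathbf{x}_2$ for fixed $\mathbf{x}_1$. The minimizer is $\mathbf{x}_2=-\mathbf{C}^{-1}\mathbf{B}\mathbf{x}_1$, and substituting it produces exactly the Schur complement form. No step here is a real obstacle. The only care needed is the block multiplication and the transpose convention on $\mathbf{B}$.
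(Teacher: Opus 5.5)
Your proposal is correct: the congruence $\mathbf{T}\mathbf{M}\mathbf{T}\T=\mathrm{diag}(\mathbf{A}-\mathbf{B}\T\mathbf{C}^{-1}\mathbf{B},\,\mathbf{C})$ with a nonsingular $\mathbf{T}$ is the standard argument behind the cited Horn--Zhang theorem, and the paper gives no proof of its own beyond that citation. Your dimension check is also right: with $\mathbf{B}$ in the lower-left block the complement must read $\mathbf{A}-\mathbf{B}\T\mathbf{C}^{-1}\mathbf{B}$, which is the form the paper actually uses when it pivots on the $\mathbf{C}$ block (e.g.\ $\overline{d}_\R\mathbf{I}_q-\mathbf{Q}\T[\mathbf{K}(\mathbf{a})-\omega^2\mathbf{M}(\mathbf{a})]^\dagger\mathbf{Q}$).
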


\begin{lemma}[Generalized Schur complement \textup{\citep[Theorem~1.20]{Horn2005}}]\label{lem:gen_schur_complement}
Let $\mathbf{M}=\begin{psmallmatrix}\mathbf{A}&\mathbf{B}\T\\ \mathbf{B}&\mathbf{C}\end{psmallmatrix}\in\mathbb{S}^{m+n}$. If $\mathbf{C}\succeq\mathbf{0}$, then the following are equivalent:
\begin{enumerate}
\item $\mathbf{M}\succeq\mathbf{0}$,
\item $\mathbf{A}-\mathbf{B}\mathbf{C}^\dagger\mathbf{B}\T\succeq\mathbf{0}$ and $\range{\mathbf{B}}\subseteq\range{\mathbf{C}}$.
\end{enumerate}
\end{lemma}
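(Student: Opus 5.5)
The goal is to prove Lemma~\ref{lem:gen_schur_complement}: with $\mathbf{C}\succeq\mathbf{0}$, $\mathbf{M}\succeq\mathbf{0}$ holds if and only if $\mathbf{A}-\mathbf{B}\T\mathbf{C}^\dagger\mathbf{B}\succeq\mathbf{0}$ and $\range{\mathbf{B}}\subseteq\range{\mathbf{C}}$. By symmetry of $\mathbf{M}$, $\mathbf{B}$ is $n\times m$, so the Schur term must be read as $\mathbf{B}\T\mathbf{C}^\dagger\mathbf{B}$, an $m\times m$ matrix. The plan is to work with the quadratic form $q(\mathbf{x},\mathbf{y})=\mathbf{x}\T\mathbf{A}\mathbf{x}+2\mathbf{y}\T\mathbf{B}\mathbf{x}+\mathbf{y}\T\mathbf{C}\mathbf{y}$ and reduce both directions to facts about the symmetric projector $\mathbf{P}=\mathbf{C}\mathbf{C}^\dagger=\mathbf{C}^\dagger\mathbf{C}$ onto $\range{\mathbf{C}}$. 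From Definition~\ref{def:pseudo-inverse}, $\mathbf{C}^\dagger$ is symmetric and positive semidefinite, $\mathbf{C}\mathbf{C}^\dagger\mathbf{C}=\mathbf{C}$, and $\kernel{\mathbf{C}}=\range{\mathbf{C}}^\perp$. I would verify these via the spectral decomposition of $\mathbf{C}$.

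\textbf{Sufficiency.} Assume $\range{\mathbf{B}}\subseteq\range{\mathbf{C}}$. Then $\mathbf{P}\mathbf{B}=\mathbf{B}$, i.e.\ $\mathbf{C}\mathbf{C}^\dagger\mathbf{B}=\mathbf{B}$. Use this to complete the square with $\mathbf{z}=\mathbf{y}+\mathbf{C}^\dagger\mathbf{B}\mathbf{x}$:
\[
q(\mathbf{x},\mathbf{y})=\mathbf{z}\T\mathbf{C}\mathbf{z}+\mathbf{x}\T(\mathbf{A}-\mathbf{B}\T\mathbf{C}^\dagger\mathbf{B})\mathbf{x}.
\]
Expanding $\mathbf{z}\T\mathbf{C}\mathbf{z}$ gives $\mathbf{y}\T\mathbf{C}\mathbf{y}+2\mathbf{y}\T\mathbf{C}\mathbf{C}^\dagger\mathbf{B}\mathbf{x}+\mathbf{x}\T\mathbf{B}\T\mathbf{C}^\dagger\mathbf{C}\mathbf{C}^\dagger\mathbf{B}\mathbf{x}$. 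The cross term equals $2\mathbf{y}\T\mathbf{B}\mathbf{x}$ by $\mathbf{P}\mathbf{B}=\mathbf{B}$, and the last term reduces to $\mathbf{x}\T\mathbf{B}\T\mathbf{C}^\dagger\mathbf{B}\mathbf{x}$ by $\mathbf{C}^\dagger\mathbf{C}\mathbf{C}^\dagger=\mathbf{C}^\dagger$. Both summands in the display are nonnegative, so $\mathbf{M}\succeq\mathbf{0}$. Equivalently, one can write the congruence $\mathbf{M}=\mathbf{L}\T\operatorname{diag}(\mathbf{A}-\mathbf{B}\T\mathbf{C}^\dagger\mathbf{B},\mathbf{C})\mathbf{L}$ with a block unit-triangular $\mathbf{L}$.

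\textbf{Necessity.} Assume $\mathbf{M}\succeq\mathbf{0}$. The range inclusion is the main obstacle, and I would handle it first. Take $\mathbf{y}\in\kernel{\mathbf{C}}$, so $q(\mathbf{x},t\mathbf{y})=\mathbf{x}\T\mathbf{A}\mathbf{x}+2t\,\mathbf{y}\T\mathbf{B}\mathbf{x}\ge 0$ for all $t\in\mathbb{R}$. A function that is affine in $t$ and bounded below must have zero slope, so $\mathbf{y}\T\mathbf{B}\mathbf{x}=0$ for every $\mathbf{x}$, i.e.\ $\mathbf{B}\T\mathbf{y}=\mathbf{0}$. Hence $\kernel{\mathbf{C}}\subseteq\kernel{\mathbf{B}\T}$. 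Taking orthogonal complements and using $\kernel{\mathbf{C}}^\perp=\range{\mathbf{C}}$ (symmetry of $\mathbf{C}$) together with $\kernel{\mathbf{B}\T}^\perp=\range{\mathbf{B}}$ yields $\range{\mathbf{B}}\subseteq\range{\mathbf{C}}$. With the inclusion established, the identity from the sufficiency part holds. Substituting $\mathbf{y}=-\mathbf{C}^\dagger\mathbf{B}\mathbf{x}$ makes $\mathbf{z}=\mathbf{0}$, so $0\le q=\mathbf{x}\T(\mathbf{A}-\mathbf{B}\T\mathbf{C}^\dagger\mathbf{B})\mathbf{x}$ for every $\mathbf{x}$, which is the Schur-complement condition.

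The only delicate points are the pseudoinverse bookkeeping, namely the projector identity $\mathbf{C}\mathbf{C}^\dagger=\mathbf{C}^\dagger\mathbf{C}=\mathbf{P}$ for symmetric $\mathbf{C}$, and the kernel–range duality. Both follow from diagonalizing $\mathbf{C}=\mathbf{U}\boldsymbol{\Lambda}\mathbf{U}\T$ with $\mathbf{C}^\dagger=\mathbf{U}\boldsymbol{\Lambda}^\dagger\mathbf{U}\T$ and checking the four conditions of Definition~\ref{def:pseudo-inverse}. When $\mathbf{C}\succ\mathbf{0}$ the range condition is automatic and $\mathbf{C}^\dagger=\mathbf{C}^{-1}$, so the result recovers Lemma~\ref{lem:schur_complement}.
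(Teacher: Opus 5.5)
Your proof is correct and complete. The paper does not prove this lemma; it only cites it from the literature, and your argument is the standard proof of that result: completing the square (equivalently, the block unit-triangular congruence) for sufficiency, and $\kernel{\mathbf{C}}\subseteq\kernel{\mathbf{B}\T}$ from the affine-in-$t$ bound for the range condition. Your reading of the Schur term as $\mathbf{A}-\mathbf{B}\T\mathbf{C}^\dagger\mathbf{B}$ is also the right one. The printed $\mathbf{B}\mathbf{C}^\dagger\mathbf{B}\T$ is dimensionally inconsistent for $m\neq n$, and even for square nonsymmetric $\mathbf{B}$ it gives a false statement. Your version is exactly how the lemma is applied later in the paper, with $\mathbf{B}=-\mathbf{f}_\A$ or $\mathbf{B}=-\mathbf{Q}$.
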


\section{Background: Weight optimization under free-vibration eigenvalue and static compliance constraints}

Weight optimization of frame structures under free-vibration eigenvalue and static compliance constraints represents a fundamental challenge in structural engineering. In this section, we summarize the results for this problem as developed in \citep{Tyburec2024global}.

\subsection{Optimization problem formulation}\label{sec:freevib}

Consider a frame structure discretized into $\nel$ Euler--Bernoulli finite elements. The primary design variables are the cross-sectional areas $\mathbf{a} \in \mathbb{R}^{\nel}_{\geq 0}$ of these elements. The objective is to minimize the structural weight, which is the sum of individual element weights
\begin{equation}
  \sum_{e=1}^{\nel} \rho_e \ell_e a_e,
\end{equation}
where $\rho_e \in \mathbb{R}_{> 0}$ represents the material density and $\ell_e \in \mathbb{R}_{> 0}$ the length of an element $e$. However, the design must also satisfy static and dynamic performance requirements. The static behavior is governed by the equilibrium equation $\mathbf{K}(\mathbf{a})\mathbf{u} = \mathbf{f}$, where $\mathbf{K}(\mathbf{a}) \in \mathbb{S}^{\ndof}_{\succeq 0}$ is the global stiffness matrix, $\mathbf{u} \in \mathbb{R}^{\ndof}$ represents nodal displacements, $\mathbf{f} \in \mathbb{R}^{\ndof}$ denotes external forces, and $\ndof$ the number of degrees of freedom. A crucial distinction from truss structures is that the frame's stiffness matrix includes both membrane and bending effects, leading to a polynomial dependence on the design variables \citep[Appendix A]{Tyburec2024global}
\begin{equation}\label{eq:stiffness}
  \mathbf{K}(\mathbf{a}) = \sum_{e=1}^{\nel} \left[a_e \mathbf{K}^{(1)}_{e} + a_e^2 \mathbf{K}^{(2)}_{e} + a_e^3 \mathbf{K}^{(3)}_{e}\right].
\end{equation}
Here, $a_e \mathbf{K}^{(1)}_{e}$ captures the membrane (axial) stiffness, while $a_e^2 \mathbf{K}^{(2)}_{e}$ and $a_e^3 \mathbf{K}^{(3)}_{e}$ represent bending effects. Hence, the maximal polynomial degree in $\mathbf{K}(\mathbf{a})$ is $\dK=3$. For all these matrices, we have by their construction $\mathbf{K}_e^{(i)} \in \mathbb{S}^{n_\mathrm{dof}}_{\succeq 0}, i=1,2,3$, so they belong to the space of real symmetric positive semidefinite matrices $\mathbb{S}^{\ndof}_{\succeq0}$.

The dynamic behavior is governed by the mass matrix $\mathbf{M}(\mathbf{a}) \in \mathbb{S}^{\ndof}_{\succeq 0}$, which has a linear dependence on the design variables
\begin{equation}\label{eq:mass}
  \mathbf{M}(\mathbf{a}) = \mathbf{M}^{(0)} + \sum_{e=1}^{\nel} a_e \mathbf{M}^{(1)}_{e},
\end{equation}
where $\mathbf{M}^{(0)} \in \mathbb{S}^{\ndof}_{\succeq 0}$ represents nonstructural (e.g., lumped) masses and $\mathbf{M}^{(1)}_{e} \in \mathbb{S}^{\ndof}_{\succeq 0}$ are the (stiffness-consistent) element mass matrices.

With these matrices defined, we can formulate the constraints on structural performance. First, under applied loads $\mathbf{f}$, the compliance (work done by external forces) should not exceed prescribed limit $\overline{c} \in \mathbb{R}_{> 0}$,
\begin{equation}
  \mathbf{f}\T \mathbf{u} \leq \overline{c}.
\end{equation}
Second, the fundamental free-vibration eigenvalue $\lambda_{\min}$ should be at least $\underline{\lambda} \in \mathbb{R}_{> 0}$, which, using the Rayleigh quotient, can be expressed as
\begin{equation}\label{eq:rayleigh}
  \underline{\lambda} \leq \lambda_{\min} := \inf_{\mathbf{w} \in \mathbb{R}^{\ndof} \setminus \kernel{\mathbf{M}(\mathbf{a})}}\frac{\mathbf{w}\T \mathbf{K}(\mathbf{a})\mathbf{w}}{\mathbf{w}\T \mathbf{M}(\mathbf{a})\mathbf{w}}.
\end{equation}
We note that $\lambda_{\min}$ is the smallest nonzero eigenvalue of the generalized eigenvalue problem $\mathbf{K}(\mathbf{a})\mathbf{w}=\lambda\,\mathbf{M}(\mathbf{a})\mathbf{w}$, with an eigenvector $\mathbf{w}\in\mathbb{R}^{\ndof}$. The exclusion of the kernel of $\mathbf{M}(\mathbf{a})$ in \eqref{eq:rayleigh} accounts for possible rigid-body modes, ensuring $\lambda_{\min}$ refers to the smallest positive eigenvalue. Since $\mathbf{K}(\mathbf{a}),\mathbf{M}(\mathbf{a})\in\mathbb{S}^{\ndof}_{\succeq 0}$, the spectrum is real and nonnegative. The value of $\lambda_{\min}$ relates to the first angular and natural frequencies as $\omega_{\min}=\sqrt{\lambda_{\min}}$ and $f_{\min}=\sqrt{\lambda_{\min}}/(2\pi)$. 

Through the generalized Schur complement (Lemma \ref{lem:gen_schur_complement}) and properties of positive semidefinite matrices, this problem can be reformulated as a semidefinite program \citep{Achtziger2008,Tyburec2024global}
\begin{subequations}\label{eq:sdp_fv}
  \begin{align}
    \min_{\mathbf{a} \in \mathbb{R}^{\nel}} & \sum_{e=1}^{\nel} \rho_e \ell_e a_e \\
    \text{s.t.} \quad & \mathbf{K}(\mathbf{a}) - \underline{\lambda} \mathbf{M}(\mathbf{a}) \succeq \mathbf{0},\label{eq:sdp_fv:fv}\\
    &
    \begin{pmatrix} \overline{c} & -\mathbf{f}\T \\ -\mathbf{f} & \mathbf{K}(\mathbf{a})
    \end{pmatrix} \succeq \mathbf{0},\label{eq:sdp_fv:c}\\
    & \mathbf{a} \ge \mathbf{0}.\label{eq:sdp_fv:a}
  \end{align}
\end{subequations}
Constraint \eqref{eq:sdp_fv:fv} is equivalent to enforcing $\mathbf{w}\T \left[\mathbf{K}(\mathbf{a})-\underline{\lambda}\mathbf{M}(\mathbf{a})\right] \mathbf{w}\ge 0$ for all $\mathbf{w}$, i.e., $\underline{\lambda}\le \lambda_{\min}$. Constraint \eqref{eq:sdp_fv:c} is the Schur complement form of $\mathbf{f}\T\mathbf{u}\le \overline{c}$ with equilibrium $\mathbf{K}(\mathbf{a})\mathbf{u}=\mathbf{f}$, and \eqref{eq:sdp_fv:a} restricts the cross-sectional areas to be nonnegative.

Formulation \eqref{eq:sdp_fv} elegantly handles issues of nondifferentiability for repeated eigenvalues, which would be present when relying on \eqref{eq:rayleigh}. However, it remains challenging due to the polynomial dependence of $\mathbf{K}(\mathbf{a})$ on $\mathbf{a}$ and the potential disconnectedness of the feasible set \citep{Ni2014,Tyburec2024global}, necessitating specialized solution techniques.

To make the feasible set compact, which is needed for convergence of relaxations introduced later, we replace \eqref{eq:sdp_fv:a} with compactifying constraints that tighten \eqref{eq:sdp_fv:a} as
\begin{subequations}\label{eq:compact}
  \begin{align}
    a_e \left(\frac{\overline{w}}{\rho_e \ell_e}- a_e\right) & \geq 0, \quad \forall e \in \{1,\dots,\nel\}, \\
    \overline{w} - \sum_{e=1}^{\nel} \rho_e \ell_e a_e & \geq 0,
  \end{align}
\end{subequations}
where $\overline{w} \in \mathbb{R}_{>0}$ represents an upper bound on the structural weight. This bound is initially obtained from solving a convex linearization of problem \eqref{eq:sdp_fv} \citep[Lemma 3.9]{Tyburec2024global} and can be iteratively tightened during the solution process as better feasible designs become available. The first constraint in \eqref{eq:compact} provides bounds on individual cross-sectional areas, while the second one bounds their weighted sum. 
These constraints ensure the feasible set is (algebraically) compact, which is needed for the convergence of the mSOS hierarchy. 


\subsection{Global solution using moment-sum-of-squares hierarchy}\label{sec:moment}

To solve the frame optimization problem globally, we employ polynomial optimization techniques. Here, we closely follow and summarize the results of \citep{Tyburec2024global}. The key idea is to view our problem as optimization over probability measures supported on the compact feasible set, rather than directly over the cross-sectional areas. Thus, instead of finding optimal cross-sectional areas, we work with (pseudo-)moments $\mathbf{y}$ of these measures, where each component represents an expectation of a monomial. This apparently more complex viewpoint leads to a sequence of tractable convex problems that converge to the global optimum.

To convert our polynomial optimization problem into a moment problem, we use the Riesz functional $L_\mathbf{y}$. Let $\mathbb{R}[\mathbf{a}]_r$ denote the space of real polynomials in $\mathbf{a} = (a_1,\dots,a_{n_\mathrm{e}})$ of total degree at most $r$, and let
\begin{equation}
\mathbf{b}_r(\mathbf{a})
  =
  \big( a_1^{\alpha_1}\cdots a_{n_\mathrm{e}}^{\alpha_{n_\mathrm{e}}} \big)_{\lvert\bm{\alpha}\rvert \le r}
\end{equation}
be the associated monomial basis of $\mathbb{R}[\mathbf{a}]_r$, ordered in some fixed but arbitrary way. We introduce a vector
\begin{equation}
  \mathbf{y} = (y_{\bm{\alpha}})_{\lvert\bm{\alpha}\rvert \le r} \in \mathbb{R}^{\dim \mathbb{R}[\mathbf{a}]_r},
\end{equation}
called a pseudo-moment vector, whose components are indexed consistently with the monomials in $\mathbf{b}_r(\mathbf{a})$. The linear functional $L_{\mathbf{y}} : \mathbb{R}[\mathbf{a}]_r \to \mathbb{R}$ is then defined by its action on monomials:
\begin{equation}
  L_{\mathbf{y}}\big(a_1^{\alpha_1}\cdots a_{n_\mathrm{e}}^{\alpha_{n_\mathrm{e}}}\big)
  \;=\;
  y_{\bm{\alpha}},
\end{equation}
i.e., $L_{\mathbf{y}}$ replaces each monomial $a_1^{\alpha_1}\cdots a_{n_\mathrm{e}}^{\alpha_{n_\mathrm{e}}}$ by the component of $\mathbf{y}$ corresponding to the index of that monomial in the basis $\mathbf{b}_r(\mathbf{a})$. By linearity, for any (matrix) polynomial
\begin{equation}
  \mathbf{P}(\mathbf{a}) = \sum_{\lvert\bm{\alpha}\rvert \le r} \mathbf{P}_{\bm{\alpha}}\,
  a_1^{\alpha_1}\cdots a_{n_\mathrm{e}}^{\alpha_{n_\mathrm{e}}
}
\end{equation}
we have
\begin{equation}
  L_{\mathbf{y}}\big(\mathbf{P}(\mathbf{a})\big)
  \;=\;
  \sum_{\lvert\bm{\alpha}\rvert \le r} \mathbf{P}_{\bm{\alpha}}\, y_{\bm{\alpha}}.
\end{equation}

Using this operator and denoting by $\mathbf{b}_r(\mathbf{a})$ the canonical basis, i.e., vector of all monomials up to degree $r$, we obtain a hierarchy of linear semidefinite programming relaxations indexed by the relaxation order $r \ge r_{\min}$
\begin{subequations}\label{eq:sos_fv}
  \begin{align}
    \min_{\mathbf{y}} \quad & L_\mathbf{y}\left(\sum_{e=1}^{\nel} \rho_e \ell_e a_e\right) \\
    \text{s.t.} \quad & L_\mathbf{y}\left(\mathbf{b}_r(\mathbf{a})\mathbf{b}_r(\mathbf{a})\T\right) \succeq \mathbf{0},\label{eq:sos_fv:moment} \\
    & L_\mathbf{y}\left(\mathbf{b}_{r-\lceil \dK/2\rceil}(\mathbf{a})\mathbf{b}_{r-\lceil \dK/2\rceil}(\mathbf{a})\T \otimes \left[\mathbf{K}(\mathbf{a}) - \underline{\lambda} \mathbf{M}(\mathbf{a})\right]\right) \succeq \mathbf{0}, \label{eq:sos_fv:fv}\\
    & L_\mathbf{y}\left(\mathbf{b}_{r-\lceil \dK/2\rceil}(\mathbf{a})\mathbf{b}_{r-\lceil \dK/2\rceil}(\mathbf{a})\T \otimes
      \begin{pmatrix} \overline{c} & -\mathbf{f}\T \\ -\mathbf{f} & \mathbf{K}(\mathbf{a})
    \end{pmatrix}\right) \succeq \mathbf{0}, \label{eq:sos_fv:static}\\
    & L_\mathbf{y}\left(\mathbf{b}_{r-1}(\mathbf{a})\mathbf{b}_{r-1}(\mathbf{a})\T \otimes \left[a_e \left(\frac{\overline{w}}{\rho_e \ell_e} - a_e\right)\right] \right) \succeq \mathbf{0}, \quad \forall e \in \{1,\dots,\nel\}\label{eq:sos_fv:compact} \\
    & L_\mathbf{y}\left(\mathbf{b}_{r-1}(\mathbf{a})\mathbf{b}_{r-1}(\mathbf{a})\T \otimes \left[\overline{w} - \sum_{e=1}^{\nel} \rho_e \ell_e a_e\right] \right) \succeq \mathbf{0},\label{eq:sos_fv:ub} \\
    & y_\mathbf{0} = 1,\label{eq:sos_fv:const}
  \end{align}
\end{subequations}
where $\dK$ is the maximum degree of polynomials in $\mathbf{K}(\mathbf{a})$, $\otimes$ denotes the Kronecker product, and $y_\mathbf{0}$ is the moment corresponding to the constant monomial $1$. The moment matrix \eqref{eq:sos_fv:moment} ensures that the moments correspond to a valid probability measure when $r\rightarrow \infty$ \citep{Henrion2006}. The localizing matrices \eqref{eq:sos_fv:fv}--\eqref{eq:sos_fv:ub} enforce that this measure is supported on the feasible set defined by the polynomial inequalities: matrices \eqref{eq:sos_fv:fv}--\eqref{eq:sos_fv:static} correspond to constraints from \eqref{eq:sdp_fv}, while \eqref{eq:sos_fv:compact}--\eqref{eq:sos_fv:ub} represent the compactification \eqref{eq:compact}. Because $\mathbf{K}(\mathbf{a})$ contains polynomials of degree $\dK$, forming its Kronecker product with degree-$2r$ monomials of $\mathbf{b}_r(\mathbf{a}) \mathbf{b}_r(\mathbf{a})\T$ in \eqref{eq:sos_fv:fv}--\eqref{eq:sos_fv:ub} would exceed degree $2r$. We thus use $\mathbf{b}_{r-\lceil \dK/2\rceil} (\mathbf{a})$ to ensure these constraints involve moments up to order $2r$ only. Finally, constraint \eqref{eq:sos_fv:const} normalizes the probability measure by setting the constant moment to one. When $\dK=2$, the valid relaxation order is thus defined by $r_{\min}= 1$, while $r_{\min}=2$ is needed for $\dK=3$.

Let $\underline{w}^{(r)}$ denote the optimal objective value of the relaxation \eqref{eq:sos_fv}. By construction, $\underline{w}^{(r)}$ is a valid lower bound on the globally optimal structural weight, and the sequence $\{\underline{w}^{(r)}\}_r$ is nondecreasing in $r$ because the feasible set of the $(r{+}1)$-th relaxation contains that of the $r$-th.

Based on the numerical results in \citep{Handa2025,Tyburec2024global}, we adopt the non-mixed-term (NMT) monomial basis
\begin{equation}\label{eq:nmt}
  \mathbf{b}_r(\mathbf{a})^{\mathrm{NMT}}
  =
  \begin{pmatrix}
    1 & a_1 & \cdots & a_{\nel} & a_1^2 & \cdots & a_{\nel}^2 & \cdots & a_{\nel}^r
  \end{pmatrix}\T,
\end{equation}
which excludes mixed monomials such as $a_i a_j$ or $a_i^2 a_j$. This dramatically reduces the sizes of the moment and localizing matrices, improving scalability.

The standard Lasserre theory guarantees asymptotic convergence (as $r\!\to\!\infty$) under Archimedeanity of the feasible set for the canonical monomial basis \citep{Lasserre2001,Henrion2006}. For the NMT basis, a comparable general proof is not available; nonetheless, in our numerical experiments the NMT hierarchy exhibits stable behavior, yields tight lower bounds $\underline{w}^{(r)}$, and numerically converges to feasible solutions for small $r$. We therefore use NMT for efficiency, while validating tightness via the optimality gap described next.

Consider the lower bound $\underline{w}^{(r)}$ from \eqref{eq:sos_fv}. From the first-order moments we extract $\tilde{a}_e=y_{\mathbf{e}_e}$ (with $\mathbf{e}_e$ being the unit vector with $e$-th component equal to one) and compute a feasible upper bound by scaling:
\begin{equation}\label{eq:scaling}
  \delta^*=\argmin_{\delta\ge 1}\{\ \delta:\ \delta\tilde{\mathbf{a}}\ \text{satisfies}\ \eqref{eq:sdp_fv}\ \},
\end{equation}
solved globally by bisection. Based on \citep[Proposition 3.12]{Tyburec2024global}, such $\delta^*$ is guaranteed to exist. Denote $\overline{w}^{(r)}=\sum_{e=1}^{\nel}\rho_e\ell_e\,(\delta^*\tilde{a}_e)$. Then $\overline{w}^{(r)}$ is a feasible upper bound and $\underline{w}^{(r)}$ a valid lower bound on the global optimum. We declare global relative $\varepsilon$-optimality if $\frac{\overline{w}^{(r)}-\underline{w}^{(r)}}{\underline{w}^{(r)}}\le \varepsilon$, which is a sufficient condition. In practice, we further refine $\overline{w}^{(r)}$ by a local optimization (e.g., sequential SDP) initialized at $\delta^*\tilde{\mathbf{a}}$.

This solution approach is summarized in Fig.~\ref{fig:optflow}.

\begin{figure}[!t]
  \centering
  \begin{tikzpicture}[node distance=0.5cm and 1.25cm]
    \tikzstyle{block} = [draw, rectangle, rounded corners=5pt, text width=4.5cm, align=center, minimum height=0.8cm, font=\footnotesize]
    \tikzstyle{decision} = [draw, diamond, text width=1.95cm, align=center, inner sep=0pt, minimum size=2.76cm, aspect=1, font=\footnotesize]
    \tikzstyle{line} = [draw, -latex, thick]

    \node[block] (start) {Find a feasible point for \eqref{eq:sdp_fv} \citep[Lemma 3.9]{Tyburec2024global}};
    \node[block, below=of start] (relaxation) {Solve relaxation \eqref{eq:sos_fv} for lower bound $\underline{w}^{(r)}$};
    \node[block, below=of relaxation] (ub) {Compute upper bound $\overline{w}^{(r)}$ via \eqref{eq:scaling}};
    \node[block, below=of ub] (ubimproved) {Refine upper bound $\overline{w}^{(r)}$ to $\hat{w}^{(r)}$ using local optimization};
    \node[decision, below=of ubimproved] (ubbetter) {$\hat{w}^{(r)} < \overline{w}_{\min}$};
    \node[block, right=of ubbetter] (tighten) {Tighten relaxation bounds \eqref{eq:compact} using $\hat{w}^{(r)}$};
    \node[decision, below=of ubbetter] (check) {$\frac{\hat{w}^{(r)}-\underline{w}^{(r)}}{\underline{w}^{(r)}} \le \varepsilon$};
    \node[block, below=of check] (end) {Return globally $\varepsilon$-optimal solution};
    \node[block, right=41mm of check] (increase) {Increase relaxation order $r$};

    \path[line] (start) -- (relaxation);
    \path[line] (relaxation) -- (ub);
    \path[line] (ub) -- (ubimproved);
    \path[line] (ubimproved) -- (ubbetter);

    \path[line] (ubbetter.east) -- ++(0.8cm,0) node[midway,above]{\footnotesize yes} -- (tighten.west);
    \path[line] (tighten.north) |- node[pos=0.25,right]{} (relaxation.east);

    \path[line] (ubbetter.south) -- ++(0,-0.35cm) node[midway,right]{\footnotesize no} -- (check.north);

    \path[line] (check.south) -- ++(0,-0.25cm) node[midway,right]{\footnotesize yes} -- (end.north);
    \path[line] (check.east) -- ++(1.0cm,0) node[midway,above]{\footnotesize no} -- (increase.west);
    \path[line] (increase.north) |- (relaxation.east);
  \end{tikzpicture}
  \caption{Flowchart of the global solution approach. $\overline{w}_{\min}$ is the tightest upper bound found so far.}
  \label{fig:optflow}
\end{figure}
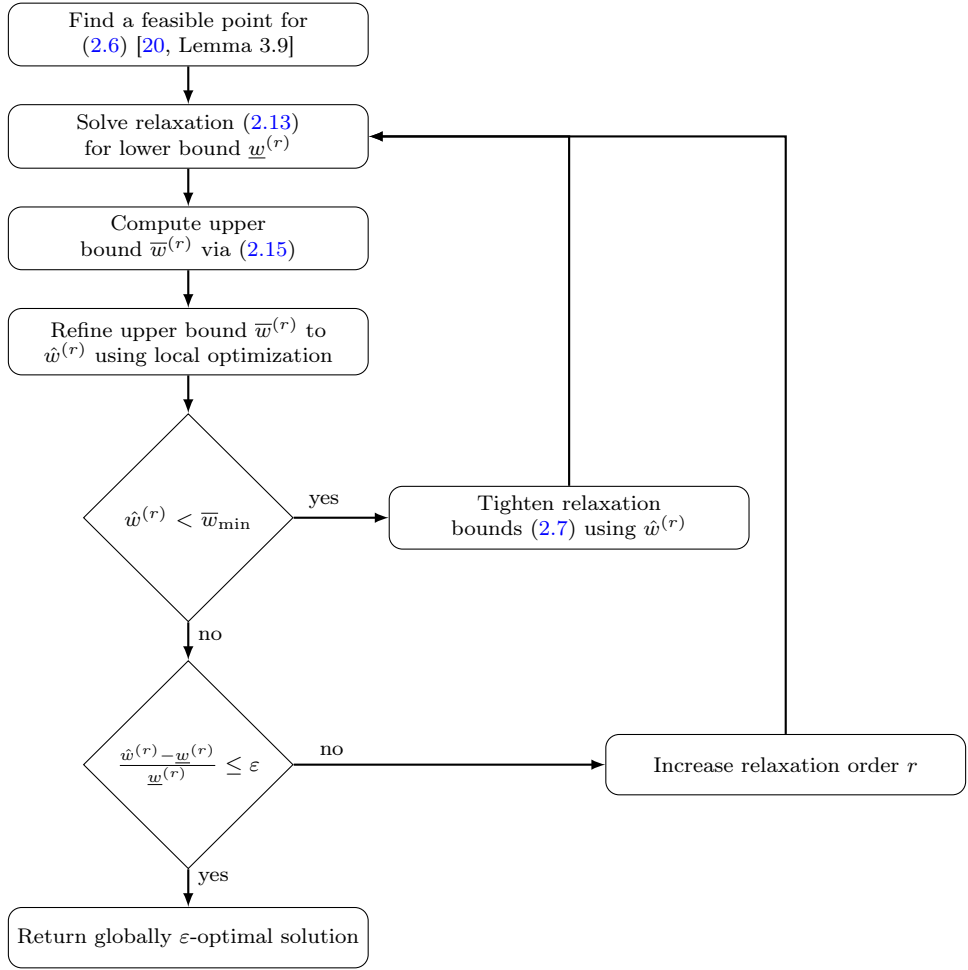

\section{Methods}

In this section, we develop semidefinite programming formulations for the minimization of weight under robust dynamic compliance (Section \ref{sec:dyncompl}) and robust peak input power (Section \ref{sec:peakpower}) constraints under harmonic loads in phase of a frequency below the first resonance. For both of these formulations, we show in Section \ref{sec:relation} that they are equivalent to certain free-vibration eigenvalue constraints of the form \eqref{eq:sdp_fv:fv}, and thus can be handled using the theory developed in \citep{Tyburec2024global} that we summarized in Section \ref{sec:freevib}. Finally, we provide a relation between the worst-case state fields associated with robust dynamic compliance and robust peak input power constraints with the eigenmode of the equivalent free-vibration eigenvalue problem.

\subsection{Robust dynamic compliance constraints under single-frequency, uniform-phase harmonic loads}\label{sec:dyncompl}

We start by developing semidefinite programming reformulation of (robust) dynamic compliance constraints, which are common in topology optimization problems under dynamic loads; see, e.g., \citep{Olhoff2016}. First, we consider the single-load case, where the dynamic compliance is defined as the maximum of the instantaneous force-displacement product \citep{Olhoff2014}. We then extend this formulation to the robust setting, where the dynamic compliance is constrained to be below a given threshold for all possible loads within a given ellipsoidal uncertainty set.

\subsubsection{Single load case semidefinite programming formulation}

We begin with the single load case setting, for which the weight minimization problem under a dynamic compliance constraint reads as
\begin{subequations}\label{eq:dynamiccomplianceopt}
  \begin{align}
    \min_{\mathbf{a}\in \mathbb{R}^{\nel}, \mathbf{u} \in \mathbb{C}^{\ndof}} & \sum_{e=1}^{n} \rho_e \ell_e a_e\\
    \mathrm{s.t.}\; & \mathbf{f}(t)\TC \mathbf{u}(t) \le \overline{d}, \quad\forall t\in \mathbb{R}\label{eq:dynamiccompliance_abs}\\
    & \mathbf{K}(\mathbf{a})\mathbf{u}(t) + \mathbf{M}(\mathbf{a})\mathbf{u}''(t) = \mathbf{f}(t),\label{eq:state_eq}\\
    & \mathbf{a}\ge \mathbf{0},
  \end{align}
\end{subequations}
where $\mathbf{u}(t) \in \mathbb{C}^{\ndof}$ and $\mathbf{f}(t) \in \mathbb{C}^{\ndof}$ are time $t$ dependent harmonic displacements and forces, respectively. In particular, we assume the following form of the transient forces with the angular frequency $\omega \in \mathbb{R}_{>0}$, constants $c_1, c_2 \in \mathbb{R}$ such that $c_1^2+c_2^2=1$, and the amplitude vector $\mathbf{f}_\A \in \mathbb{R}^{\ndof}$:
\begin{equation}\label{eq:forces}
  \begin{aligned}
    \mathbf{f}(t) &= c_1 \cos(\omega t) \mathbf{f}_\A + c_2 \sin(\omega t) \mathbf{f}_\A\\
    &= \frac{1}{2} e^{\omega t i}(c_1 \mathbf{f}_\A - i c_2 \mathbf{f}_\A) + \frac{1}{2} e^{-\omega t i}( c_1 \mathbf{f}_\A + i c_2 \mathbf{f}_\A).
  \end{aligned}
\end{equation}
This formulation expresses the harmonic load as a sum of complex exponentials, which allows us to solve the differential equation \eqref{eq:state_eq} using the method of undetermined coefficients. Since the system is linear, we expect the displacement response to have the same harmonic form as the applied load, but with different amplitudes,
\begin{equation}
  \mathbf{u}(t) = \frac{1}{2} e^{\omega t i}(\mathbf{u}_\Real - i \mathbf{u}_\I) + \frac{1}{2} e^{-\omega t i}( \mathbf{u}_\Real + i \mathbf{u}_\I),
\end{equation}
where $\mathbf{u}_\Real, \mathbf{u}_\I \in \mathbb{R}^{\ndof}$ are the real and imaginary amplitude vectors, respectively. Substituting these expressions for $\mathbf{f}(t)$ and $\mathbf{u}(t)$ into the state equation \eqref{eq:state_eq} yields partitioning based on the real and complex parts as
\begin{subequations}
  \begin{align}
    \left[\mathbf{K}(\mathbf{a}) - \omega^2 \mathbf{M}(\mathbf{a})\right]\mathbf{u}_\Real \left( \frac{1}{2} e^{\omega t i} + \frac{1}{2} e^{-\omega t i}  \right) &= c_1 \mathbf{f}_\A \left( \frac{1}{2} e^{\omega t i} + \frac{1}{2} e^{-\omega t i}  \right),\\
    \left[\mathbf{K}(\mathbf{a}) - \omega^2 \mathbf{M}(\mathbf{a})\right] \mathbf{u}_\I \left( \frac{1}{2} e^{-\omega t i} -\frac{1}{2} e^{\omega t i}  \right)i &=  c_2 \mathbf{f}_\A \left( \frac{1}{2} e^{-\omega t i} - \frac{1}{2} e^{\omega t i} \right)i,
  \end{align}
\end{subequations}
yielding
\begin{subequations}\label{eq:uamp}
  \begin{align}
    \mathbf{u}_\A = \left[ \mathbf{K}(\mathbf{a}) - \omega^2 \mathbf{M}(\mathbf{a})\right]^\dagger \mathbf{f}_\A \quad &\text{if} \quad \mathbf{f}_\A \in \range{\mathbf{K}(\mathbf{a}) - \omega^2 \mathbf{M}(\mathbf{a})},\\
    \mathbf{u}_\A = \emptyset \quad& \text{otherwise},
  \end{align}
\end{subequations}
and
\begin{subequations}
  \begin{align}
    \mathbf{u}_\Real &= c_1 \mathbf{u}_\A,\\
    \mathbf{u}_\I &= c_2 \mathbf{u}_\A.
  \end{align}
\end{subequations}
Here, $(\cdot)^\dagger$ denotes the Moore-Penrose pseudoinverse; under $\omega^2<\lambda_{\min}$ and $\mathbf{a}>\mathbf{0}$ it coincides with the inverse on the relevant subspace. All amplitude vectors are real, and we use $(\cdot)\T$ throughout.

Consequently, the dynamic compliance in \eqref{eq:dynamiccompliance_abs} satisfying \eqref{eq:state_eq} evaluates as
\begin{equation}\label{eq:dynamiccompliance}
  \begin{aligned}
    d &= \sup_{t} \mathbf{f}(t)\TC \mathbf{u}(t)\\
    &
    \begin{multlined}
      = \frac{1}{4} \sup_{t} \left[e^{\omega t i}(c_1 \mathbf{f}_\A - i c_2 \mathbf{f}_\A) + e^{-\omega t i}( c_1 \mathbf{f}_\A + i c_2 \mathbf{f}_\A)\right]\TC\\
      \left[ e^{\omega t i}(c_1 \mathbf{u}_\A - i c_2 \mathbf{u}_\A) + e^{-\omega t i}( c_1 \mathbf{u}_\A + i c_2 \mathbf{u}_\A) \right]
    \end{multlined}\\
    &= \sup_{t} \mathbf{f}_\A\T \left[ \mathbf{K}(\mathbf{a}) - \omega^2 \mathbf{M}(\mathbf{a})\right]^\dagger \mathbf{f}_\A \left[c_1 \cos(\omega t) + c_2 \sin(\omega t)\right]^2.
  \end{aligned}
\end{equation}
To provide a semidefinite programming formulation for the dynamic compliance constraint for $\mathbf{a}\ge \mathbf{0}$, we further restrict $\mathbf{K}(\mathbf{a}) - \omega^2 \mathbf{M}(\mathbf{a}) \succeq 0$. Notice that, based on Section \ref{sec:freevib}, this is equivalent to requiring that $\omega^2 \le \lambda_{\min}$, where $\lambda_{\min}$ is the lowest nonzero eigenvalue of the generalized free-vibration eigenvalue problem $\mathbf{K}(\mathbf{a}) \mathbf{w} - \lambda \mathbf{M}(\mathbf{a})\mathbf{w} = \mathbf{0}$. Such restriction makes the dynamic compliance physically meaningful \citep{Silva2019} and can often be found implicitly implemented, e.g., \citep{Olhoff2016}.

Enforcing this restriction, it follows that $\mathbf{f}_\A\T \left[ \mathbf{K}(\mathbf{a}) - \omega^2 \mathbf{M}(\mathbf{a})\right]^\dagger \mathbf{f}_\A \ge 0$. Because of $c_1^2+c_2^2=1$, we can eliminate explicit dependence on the time $t$ in \eqref{eq:dynamiccompliance} by explicitly evaluating $\sup_t (c_1 \cos(\omega t) + c_2 \sin(\omega t))^2$, yielding
\begin{equation}\label{eq:dcompfinal}
  d = \mathbf{f}_\A\T \left[ \mathbf{K}(\mathbf{a}) - \omega^2 \mathbf{M}(\mathbf{a})\right]^\dagger \mathbf{f}_\A.
\end{equation}
The final expression in equation \eqref{eq:dcompfinal} reveals that the instantaneous force-displacement product reaches its maximum when the phase angle between force and displacement aligns perfectly, which occurs at the time $t = \frac{1}{\omega} \arctan \frac{c_2}{c_1}$ with the period of $\pi/\omega$.

Consequently, we can state the semidefinite version of the dynamic compliance constraints as follows.

\begin{proposition}[Dynamic compliance matrix inequality]\label{prop:dynamic_compliance}
  The following statements are equivalent:
  \begin{itemize}
    \item $\mathbf{f}_\A\T \left[ \mathbf{K}(\mathbf{a}) - \omega^2 \mathbf{M}(\mathbf{a})\right]^\dagger \mathbf{f}_\A \le \overline{d}$, $\mathbf{f}_\A \in \range{\mathbf{K}(\mathbf{a}) - \omega^2 \mathbf{M}(\mathbf{a})}$, $\overline{d}>0$ and $\mathbf{K}(\mathbf{a}) - \omega^2 \mathbf{M}(\mathbf{a})\succeq 0$,
    \item 
      $
      \begin{pmatrix}
        \overline{d} & - \mathbf{f}_\A\T\\
        -\mathbf{f}_\A & \mathbf{K}(\mathbf{a}) - \omega^2 \mathbf{M}(\mathbf{a})
      \end{pmatrix} \succeq 0.$
  \end{itemize}
  \begin{proof}
    The proof follows from a direct application of Lemma \ref{lem:gen_schur_complement}.
  \end{proof}
\end{proposition}

\subsubsection{Robust settings}\label{sec:dyncompl_robust}

The recently developed semidefinite constraint \eqref{eq:dcompfinal} can be extended to a robust setting, where dynamic compliance is constrained to be below a given threshold $\overline{d}_\R$ for all possible loads within a given set of ellipsoidal uncertainties. In the spirit of \citep{BenTal1997}, we restrict the loads within an ellipsoid $\{\mathbf{Q}\mathbf{e}: \mathbf{e} \in \mathbb{R}^{q}, \lVert\mathbf{e}\rVert_2 \le 1\}$ with the matrix $\mathbf{Q} \in \mathbb{R}^{\ndof \times q}$ scaling the $q$-dimensional unit ball $\lVert \mathbf{e}\rVert_2 \le 1$. Hence, we require that
\begin{equation}\label{eq:dyncomplconstr}
  \overline{d}_\R \ge d_\R := \sup_{\lVert \mathbf{e} \rVert_2 \le 1} \sup_{t} \mathbf{f}_\R (t, \mathbf{e})\TC \mathbf{u}(t, \mathbf{e})
\end{equation}
with
\begin{equation}\label{eq:forcerobust}
  \mathbf{f}_\R (t, \mathbf{e}) = c_1 \cos(\omega t) \mathbf{Q} \mathbf{e} + c_2 \sin(\omega t) \mathbf{Q} \mathbf{e}.
\end{equation}
Analogously to the previous section, we obtain
\begin{equation}
  d_\R = \sup_{\lVert\mathbf{e}\rVert_2 \le 1} \left( \mathbf{Q} \mathbf{e}\right)\T \left[ \mathbf{K}(\mathbf{a}) - \omega^2 \mathbf{M}(\mathbf{a}) \right]^\dagger \mathbf{Q} \mathbf{e}
\end{equation}
with $\range{\mathbf{Q}} \subseteq \range{\mathbf{K}(\mathbf{a}) - \omega^2 \mathbf{M}(\mathbf{a})}$. The robust dynamic compliance constraint \eqref{eq:dyncomplconstr} then requires 
\begin{equation}
\{\forall\mathbf{e} \in \mathbb{R}^q: \lVert\mathbf{e}\rVert_2 \le 1\}: \qquad\overline{d}_\R - \mathbf{e}\T\mathbf{Q}\T \left[ \mathbf{K}(\mathbf{a}) - \omega^2 \mathbf{M}(\mathbf{a})\right]^\dagger \mathbf{Q} \mathbf{e} \ge 0.
\end{equation}
After enforcing \(\mathbf{K}(\mathbf{a})-\omega^2\mathbf{M}(\mathbf{a})\succeq0\), we also have $\mathbf{A}:=\mathbf{Q}\T\!\big[\mathbf{K}(\mathbf{a})-\omega^2\mathbf{M}(\mathbf{a})\big]^\dagger\mathbf{Q}\succeq0$. By the Rayleigh-Ritz principle, $d_\R=\sup_{\|\mathbf{e}\|_2\le 1}\mathbf{e}\T\mathbf{A}\mathbf{e}=\lambda_{\max}(\mathbf{A})$ and hence
\begin{equation}\label{eq:robust_dyncompl_sphere}
  \overline{d}_\R \mathbf{I}_q - \mathbf{Q}\T \left[ \mathbf{K}(\mathbf{a}) - \omega^2 \mathbf{M}(\mathbf{a})\right]^\dagger \mathbf{Q} \succeq 0.
\end{equation}
This formulation transforms our constraint from an infinite set of inequalities (one for each possible load in the uncertainty set) into a single matrix inequality constraint.

Applying the generalized Schur complement (Lemma \ref{lem:gen_schur_complement}) to \eqref{eq:robust_dyncompl_sphere} yields
\begin{equation}\label{eq:robustdynamiccompliance}
  \begin{pmatrix}
    \overline{d}_\R\mathbf{I}_q & -\mathbf{Q}\T\\
    -\mathbf{Q}  & \mathbf{K}(\mathbf{a}) - \omega^2 \mathbf{M}(\mathbf{a})
  \end{pmatrix}\succeq 0.
\end{equation}
Finally, we note how to recover the worst-case load amplitudes $\mathbf{f}_\R$ from the set $\{\mathbf{Q}\mathbf{e}: \mathbf{e} \in \mathbb{R}^{q}, \lVert\mathbf{e}\rVert_2 \le 1\}$ and compute the worst-case displacement amplitudes $\mathbf{u}_\R$.

\begin{lemma}[Worst-case load and dynamic compliance]\label{lemma:worstload}
Let \eqref{eq:robustdynamiccompliance} hold and set $\mathbf{A}:=\mathbf{Q}\T\big[\mathbf{K}(\mathbf{a})-\omega^2\mathbf{M}(\mathbf{a})\big]^\dagger\mathbf{Q}\in\mathbb{S}^q_{\succeq 0}$. If $\mathbf{r}_q\in\mathbb{R}^q$ is a unit eigenvector of $\mathbf{A}$ associated with its largest eigenvalue, then the worst-case load is $\mathbf{f}_\R=\mathbf{Q}\mathbf{r}_q$, and the corresponding worst-case dynamic compliance is equal to $\lambda_{\max}(\mathbf{A})$.
\end{lemma}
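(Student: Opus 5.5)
The approach is to reduce the statement to the Rayleigh--Ritz characterization of the largest eigenvalue of $\mathbf{A}$, which the constraint \eqref{eq:robustdynamiccompliance} makes well defined. I would carry it out in three steps.

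\emph{Step 1.} I first extract from \eqref{eq:robustdynamiccompliance} the properties needed to make $\mathbf{A}$ and the worst-case quantities meaningful. Positive semidefiniteness of the block matrix implies that its lower-right block $\mathbf{K}(\mathbf{a})-\omega^2\mathbf{M}(\mathbf{a})$ is positive semidefinite, being a principal submatrix. With $\mathbf{C}:=\mathbf{K}(\mathbf{a})-\omega^2\mathbf{M}(\mathbf{a})\succeq\mathbf{0}$, Lemma~\ref{lem:gen_schur_complement} then gives two facts: $\range{\mathbf{Q}}\subseteq\range{\mathbf{C}}$, and $\overline{d}_\R\mathbf{I}_q-\mathbf{A}\succeq\mathbf{0}$. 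Since $\mathbf{C}^\dagger$ is symmetric positive semidefinite for symmetric positive semidefinite $\mathbf{C}$, we also get $\mathbf{A}=\mathbf{Q}\T\mathbf{C}^\dagger\mathbf{Q}\succeq\mathbf{0}$. Consequently, for every $\mathbf{e}$ the load amplitude $\mathbf{Q}\mathbf{e}$ lies in $\range{\mathbf{C}}$, so by \eqref{eq:uamp} the response $\mathbf{u}_\A=\mathbf{C}^\dagger\mathbf{Q}\mathbf{e}$ exists.

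\emph{Step 2.} Next I use \eqref{eq:dcompfinal}, derived for a single load, applied to $\mathbf{f}_\A=\mathbf{Q}\mathbf{e}$: the dynamic compliance of that load equals $\mathbf{e}\T\mathbf{A}\mathbf{e}$, because the supremum over $t$ of $(c_1\cos\omega t+c_2\sin\omega t)^2$ equals $1$ and the quadratic form is nonnegative. Hence $d_\R=\sup_{\|\mathbf{e}\|_2\le1}\mathbf{e}\T\mathbf{A}\mathbf{e}$. For symmetric positive semidefinite $\mathbf{A}$ this supremum is attained on the unit sphere; indeed, scaling $\mathbf{e}$ up to unit norm does not decrease the form. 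By Rayleigh--Ritz, its value is $\lambda_{\max}(\mathbf{A})$.

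\emph{Step 3.} Finally I verify attainment at the proposed load. Take $\mathbf{e}=\mathbf{r}_q$; it is feasible since $\|\mathbf{r}_q\|_2=1$, and it satisfies $\mathbf{r}_q\T\mathbf{A}\mathbf{r}_q=\lambda_{\max}(\mathbf{A})\mathbf{r}_q\T\mathbf{r}_q=\lambda_{\max}(\mathbf{A})$. So $\mathbf{f}_\R=\mathbf{Q}\mathbf{r}_q$ is a maximizer of the robust dynamic compliance, and the worst-case value equals $\lambda_{\max}(\mathbf{A})$. This value is at most $\overline{d}_\R$ by Step 1. The corresponding worst-case displacement amplitude is $\mathbf{u}_\R=\mathbf{C}^\dagger\mathbf{Q}\mathbf{r}_q$, which is well defined by the range inclusion from Step 1.

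The only delicate point is Step 1, namely justifying that the pseudoinverse-based response formula is legitimate for every load in the ellipsoid. This rests entirely on the range condition supplied by Lemma~\ref{lem:gen_schur_complement}. A minor remark completes the argument: the worst-case load is unique only up to the choice of eigenvector when $\lambda_{\max}(\mathbf{A})$ is repeated, and up to sign. The lemma as stated is consistent with this, since it takes any unit eigenvector associated with the largest eigenvalue.
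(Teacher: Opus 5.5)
Your proposal is correct and follows the same route as the paper, whose proof applies the Rayleigh--Ritz principle to get $\sup_{\lVert\mathbf{e}\rVert_2\le 1}\mathbf{e}\T\mathbf{A}\mathbf{e}=\lambda_{\max}(\mathbf{A})$, attained at the unit eigenvector $\mathbf{r}_q$, which gives the load $\mathbf{Q}\mathbf{r}_q$. Your Steps 1--2 add justifications the paper leaves implicit: the range inclusion $\range{\mathbf{Q}}\subseteq\range{\mathbf{K}(\mathbf{a})-\omega^2\mathbf{M}(\mathbf{a})}$ and $\mathbf{A}\succeq\mathbf{0}$, both obtained via Lemma~\ref{lem:gen_schur_complement}, and the reason the supremum over the unit ball equals the supremum over the unit sphere.
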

\begin{proof}
By the Rayleigh-Ritz principle, $\sup_{\lVert\mathbf{e}\rVert_2\le 1}\mathbf{e}\T \mathbf{A}\mathbf{e}=\lambda_{\max}(\mathbf{A})$, attained at any unit eigenvector $\mathbf{r}_q$ of $\mathbf{A}$. The load amplitude is $\mathbf{Q}\mathbf{r}_q$.
\end{proof}

Using the result of this lemma and \eqref{eq:uamp}, the worst-case amplitudes of displacements $\mathbf{u}_\R \in \mathbb{R}^{\ndof}$ evaluate as
\begin{equation}
  \mathbf{u}_\R = \left[ \mathbf{K}(\mathbf{a}) - \omega^2 \mathbf{M}(\mathbf{a})\right]^\dagger \mathbf{Q} \mathbf{r}_{q}.
\end{equation}

\subsection{Robust peak input power constraints under single-frequency, uniform-phase harmonic loads}\label{sec:peakpower}

Next, we provide a semidefinite constraint for the (robust) peak input power that is bounded from above by $\overline{p}$. Since derivation of this constraint is analogous to dynamic compliance presented in the previous section, we provide less details and highlight the differences. The peak input power constraints have also been developed by \citet{Heidari2009optimization}. However, our derivation allows for true topology optimization problems with vanishing cross-sectional areas, i.e., $\mathbf{a}\ge \mathbf{0}$ instead of $\mathbf{a}>\mathbf{0}$, which is crucial when treating oscillating slender beam elements.

Peak input power is particularly important in design scenarios where energy dissipation capacity is limited, such as in lightweight aerospace structures, vibration-sensitive equipment, or structures with fatigue concerns. By constraining peak input power rather than just displacements, designers can directly control the energy transferred into the structure during dynamic loading events.

\subsubsection{Single load case semidefinite programming formulation}

Let us start with the single load case weight minimization problem under a peak input power constraint, which can be formalized as
\begin{subequations}\label{eq:peakpoweropt}
  \begin{align}
    \min_{\mathbf{a}\in \mathbb{R}^{\nel}, \mathbf{v} \in \mathbb{C}^{\ndof}} & \sum_{e=1}^{n} \rho_e \ell_e a_e\\
    \mathrm{s.t.}\; & \lvert \mathbf{f}(t)\TC \mathbf{v}(t) \rvert \le \overline{p}, \quad\forall t\in \mathbb{R}\label{eq:peakpower_abs}\\
    & \mathbf{K}(\mathbf{a})\mathbf{v}(t) + \mathbf{M}(\mathbf{a})\mathbf{v}''(t) = \mathbf{f}'(t),\label{eq:velocity_eq}\\
    & \mathbf{a}\ge \mathbf{0},
  \end{align}
\end{subequations}
where $\mathbf{f}(t) \in \mathbb{C}^{\ndof}$ and $\mathbf{v}(t) \in \mathbb{C}^{\ndof}$ are time $t$ dependent harmonic loads and velocities, respectively. For the harmonic forces as defined in \eqref{eq:forces} and for $\{c_1,c_2 \in \mathbb{R}: c_1^2+c_2^2 = 1\}$, the time derivative $\mathbf{f}'(t)$ reads as
\begin{equation}
  \mathbf{f}'(t) = \frac{1}{2} \omega e^{\omega t i}(i c_1 \mathbf{f}_\A + c_2 \mathbf{f}_\A) + \frac{1}{2}\omega e^{-\omega t i}( - i c_1 \mathbf{f}_\A + c_2 \mathbf{f}_\A).
\end{equation}
Consequently, $\mathbf{v}(t)$ solving \eqref{eq:velocity_eq} takes the form
\begin{equation}
  \mathbf{v}(t) = \frac{1}{2} e^{\omega t i}(\mathbf{v}_\Real - i \mathbf{v}_\I) + \frac{1}{2} e^{-\omega t i}( \mathbf{v}_\Real + i \mathbf{v}_\I),
\end{equation}
with $\mathbf{v}_\Real, \mathbf{v}_\I \in \mathbb{R}^{\ndof}$ being the real and imaginary amplitude vectors, respectively. Substituting these expressions for $\mathbf{f}'(t)$ and $\mathbf{v}(t)$ into the state equation \eqref{eq:velocity_eq} yields
\begin{subequations}
  \begin{align}
    \left[\mathbf{K}(\mathbf{a}) - \omega^2 \mathbf{M}(\mathbf{a})\right]\mathbf{v}_\Real \left( \frac{1}{2} e^{\omega t i} + \frac{1}{2} e^{-\omega t i}  \right) &=\omega c_2 \mathbf{f}_\A \left( \frac{1}{2} e^{\omega t i} + \frac{1}{2} e^{-\omega t i}  \right),\\
    \left[\mathbf{K}(\mathbf{a}) - \omega^2 \mathbf{M}(\mathbf{a})\right] \mathbf{v}_\I \left( -\frac{1}{2} e^{\omega t i} + \frac{1}{2} e^{-\omega t i}  \right)i &= \omega c_1 \mathbf{f}_\A \left( \frac{1}{2} e^{\omega t i} - \frac{1}{2} e^{-\omega t i}  \right)i.
  \end{align}
\end{subequations}
Therefore, the velocity amplitudes $\mathbf{v}_\A$ evaluate as
\begin{subequations}\label{eq:vamp}
  \begin{align}
    \mathbf{v}_\A = \left[ \mathbf{K}(\mathbf{a}) - \omega^2 \mathbf{M}(\mathbf{a})\right]^\dagger \omega \mathbf{f}_\A \quad &\text{if} \quad \mathbf{f}_\A \in \range{\mathbf{K}(\mathbf{a}) - \omega^2 \mathbf{M}(\mathbf{a})},\\
    \mathbf{v}_\A = \left\{\emptyset\right\} \quad& \text{otherwise},
  \end{align}
\end{subequations}

with
\begin{subequations}
  \begin{align}
    \mathbf{v}_\Real &= c_2 \mathbf{v}_\A,\\
    \mathbf{v}_\I &= -c_1 \mathbf{v}_\A.
  \end{align}
\end{subequations}
The velocity amplitude in equation \eqref{eq:vamp} is proportional to both the force amplitude and the excitation frequency $\omega$. Physically, this reflects that higher frequency vibrations produce larger velocities for the same displacement amplitude. The velocity field is phase-shifted relative to the displacement field by 90 degrees, which is characteristic of harmonic motion.

Using these velocity amplitudes, we can now formulate the peak input power, which represents the maximum rate of energy transfer between the external loads and the structure during a vibration cycle. Excluding the nonphysical scenario of $\mathbf{f}_\A \notin \range{\mathbf{K}(\mathbf{a}) - \omega^2 \mathbf{M}(\mathbf{a})}$ while accounting for \eqref{eq:velocity_eq}, the peak input power from \eqref{eq:peakpower_abs} evaluates as
\begin{equation}\label{eq:peakpower}
  \begin{aligned}
    p &= \sup_{t} \left\lvert \mathbf{f}(t)\TC \mathbf{v}(t)\right\rvert\\
    &=
    \begin{multlined}
      \frac{1}{4} \sup_{t} \left\lvert\left[e^{\omega t i}(c_1 \mathbf{f}_\A - i c_2 \mathbf{f}_\A) + e^{-\omega t i}( c_1 \mathbf{f}_\A + i c_2 \mathbf{f}_\A)\right]\TC\right.\\
      \left.\left[ e^{\omega t i}(c_2\mathbf{v}_\A + i c_1 \mathbf{v}_\A) + e^{-\omega t i}( c_2 \mathbf{v}_\A - i c_1 \mathbf{v}_\A) \right]\right\rvert
    \end{multlined}\\
    &= \frac{1}{2}\omega \sup_{t} \left\lvert \mathbf{f}_\A\T \left[ \mathbf{K}(\mathbf{a}) - \omega^2 \mathbf{M}(\mathbf{a})\right]^\dagger \mathbf{f}_\A \left[\left(c_2^2-c_1^2\right)\sin(2 \omega t) + 2 c_1 c_2 \cos(2 \omega t)\right]\right\rvert.
  \end{aligned}
\end{equation}

To provide a semidefinite programming reformulation of \eqref{eq:peakpoweropt} analogous to \citep{Heidari2009optimization} for $\mathbf{a}\ge \mathbf{0}$, we again need to enforce subresonant harmonic loads, i.e., $\mathbf{K}(\mathbf{a}) - \omega^2 \mathbf{M}(\mathbf{a}) \succeq 0$. Then, after recognizing that $\mathbf{f}_\A\T \left[ \mathbf{K}(\mathbf{a}) - \omega^2 \mathbf{M}(\mathbf{a})\right]^\dagger \mathbf{f}_\A \ge 0$ and utilizing the phase-shift formula,
\begin{equation}\label{eq:phaseshift}
 \sup_{t} \left\lvert(c_2^2 - c_1^2) \sin(2\omega t) + 2 c_1 c_2 \cos(2\omega t)\right\rvert = \sup_t \left\lvert\sin\left(2\omega t + \arctan\left(\frac{2 c_1 c_2}{c_2^2 - c_1^2}\right)\right)\right\rvert = 1,
\end{equation}
the peak input power evaluates as
\begin{equation}\label{eq:peakpowerfinal}
  p = \frac{1}{2} \omega \mathbf{f}_\A\T \left[ \mathbf{K}(\mathbf{a}) - \omega^2 \mathbf{M}(\mathbf{a})\right]^\dagger \mathbf{f}_\A.
\end{equation}
Due to the absolute value in \eqref{eq:phaseshift}, this peak value occurs at the time $t = \frac{1}{2\omega} \arctan \frac{2c_1 c_2}{c_2^2 - c_1^2} + \frac{k\pi}{2\omega}, k\in\mathbb{Z}$.

Similarly to the dynamic compliance, we thus receive the following semidefinite programming formulation for the peak input power constraint.

\begin{proposition}[Peak input power matrix inequality]\label{prop:peak_power}
  The following statements are equivalent:
  \begin{itemize}
    \item $\frac{1}{2} \omega \mathbf{f}_\A\T \left[ \mathbf{K}(\mathbf{a}) - \omega^2 \mathbf{M}(\mathbf{a})\right]^\dagger \mathbf{f}_\A \le \overline{p}$, $\mathbf{f}_\A \in \range{\mathbf{K}(\mathbf{a}) - \omega^2 \mathbf{M}(\mathbf{a})}$, $\overline{p}>0$, $\mathbf{K}(\mathbf{a}) - \omega^2 \mathbf{M}(\mathbf{a})\succeq 0$,
    \item 
      $
      \begin{pmatrix}
        \frac{2\overline{p}}{\omega} & - \mathbf{f}_\A\T\\
        -\mathbf{f}_\A & \mathbf{K}(\mathbf{a}) - \omega^2 \mathbf{M}(\mathbf{a})
      \end{pmatrix} \succeq 0.$
  \end{itemize}
  \begin{proof}
    The proof follows again from Lemma \ref{lem:gen_schur_complement}.
  \end{proof}
\end{proposition}

\subsubsection{Robust settings}\label{sec:peakpower_robust}

Let us now proceed with the robust setting of the peak input power constraint, with the loads within an ellipsoid $\{\mathbf{Q}\mathbf{e}: \mathbf{e} \in \mathbb{R}^{q}, \lVert\mathbf{e}\rVert_2 \le 1\}$. Hence, we have
\begin{equation}
  \overline{p}_\R \ge p_\R := \sup_{\lVert \mathbf{e} \rVert_2 \le 1} \sup_{t} \lvert \mathbf{f}_\R (t, \mathbf{e})\TC \mathbf{v}(t, \mathbf{e})\rvert
\end{equation}
with $\mathbf{f}_\R$ defined in \eqref{eq:forcerobust} (the ellipsoidal uncertainty representation from Section \ref{sec:dyncompl_robust}). Using developments analogous to the previous section, we obtain
\begin{equation}
  p_\R = \frac{1}{2}\omega \sup_{\lVert\mathbf{e}\rVert_2 \le 1} \left(\mathbf{Q}\mathbf{e}\right)\T \left[ \mathbf{K}(\mathbf{a}) - \omega^2 \mathbf{M}(\mathbf{a})\right]^\dagger \mathbf{Q} \mathbf{e}
\end{equation}
with $\range{\mathbf{Q}} \subseteq \range{\mathbf{K}(\mathbf{a}) - \omega^2 \mathbf{M}(\mathbf{a})}$. The robust peak input power constraint then requires
\begin{equation}
  \forall \{\mathbf{e} \in \mathbb{R}^q: \lVert\mathbf{e}\rVert_2 \le 1\}: \qquad\overline{p}_\R - \frac{\omega}{2}\mathbf{e}\T\mathbf{Q}\T \left[ \mathbf{K}(\mathbf{a}) - \omega^2 \mathbf{M}(\mathbf{a})\right]^\dagger \mathbf{Q} \mathbf{e} \ge 0,
\end{equation}
which is, after enforcing $\mathbf{K}(\mathbf{a}) - \omega^2 \mathbf{M}(\mathbf{a})\succeq 0$ and similar arguments to Section \ref{sec:dyncompl_robust}, equivalent to a semidefinite constraint
\begin{equation}\label{eq:robust_peakpower_sphere}
  \overline{p}_\R \mathbf{I}_q - \frac{\omega}{2} \mathbf{Q}\T \left[ \mathbf{K}(\mathbf{a}) - \omega^2 \mathbf{M}(\mathbf{a})\right]^\dagger \mathbf{Q} \succeq 0
\end{equation}
and further to
\begin{equation}\label{eq:robust_peakpower}
  \begin{pmatrix}
    \frac{2 \overline{p}_\R}{\omega}\mathbf{I}_q & - \mathbf{Q}\T\\
    -\mathbf{Q} & \mathbf{K}(\mathbf{a}) - \omega^2 \mathbf{M}(\mathbf{a})
  \end{pmatrix} \succeq 0.
\end{equation}
The robust peak input power constraint in equation \eqref{eq:robust_peakpower} ensures that for any loading scenario within the defined uncertainty set, the peak input power remains below the specified threshold $\overline{p}_\R$.

Finally, we can recover the worst-case load amplitudes $\mathbf{f}_\R$ from the set $\{\mathbf{Q}\mathbf{e}: \mathbf{e} \in \mathbb{R}^{q}, \lVert\mathbf{e}\rVert_2 \le 1\}$ and compute the worst-case velocity amplitudes $\mathbf{v}_\R$. The worst-case load is again evaluated as
\begin{equation}
  \mathbf{f}_\R = \mathbf{Q} \mathbf{r}_q,
\end{equation}
which can be derived analogously to Lemma \ref{lemma:worstload}, and the corresponding worst-case velocity amplitudes $\mathbf{v}_\R \in \mathbb{R}^{\ndof}$ based on \eqref{eq:vamp} as
\begin{equation}
  \mathbf{v}_\R = \omega \left[ \mathbf{K}(\mathbf{a}) - \omega^2 \mathbf{M}(\mathbf{a})\right]^\dagger \mathbf{Q} \mathbf{r}_q.
\end{equation}

\subsection{Relations between dynamic compliance, peak input power, free-vibration eigenvalue, and static compliance constraints}\label{sec:relation}

In what follows, we establish relationships between robust dynamic compliance, robust peak input power constraints, fundamental free-vibration eigenvalue constraints, and static compliance constraints. We demonstrate that dynamic compliance and peak input power constraints are equivalent for specific parameter choices, and that both can be expressed in the form of fundamental free-vibration eigenvalue constraints. Furthermore, we prove that the worst-case state fields (displacements and velocities) correspond precisely to the eigenmode associated with the lowest positive eigenvalue of the equivalent free-vibration eigenvalue problem. Finally, we also demonstrate that robust dynamic compliance constraints naturally reduce to robust static compliance constraints as the excitation frequency approaches zero, completing the relationship chain between dynamic, free-vibration, and static structural responses. For reader's convenience, we illustrate the constraint relationships in Fig.~\ref{fig:constraint_relationships}.

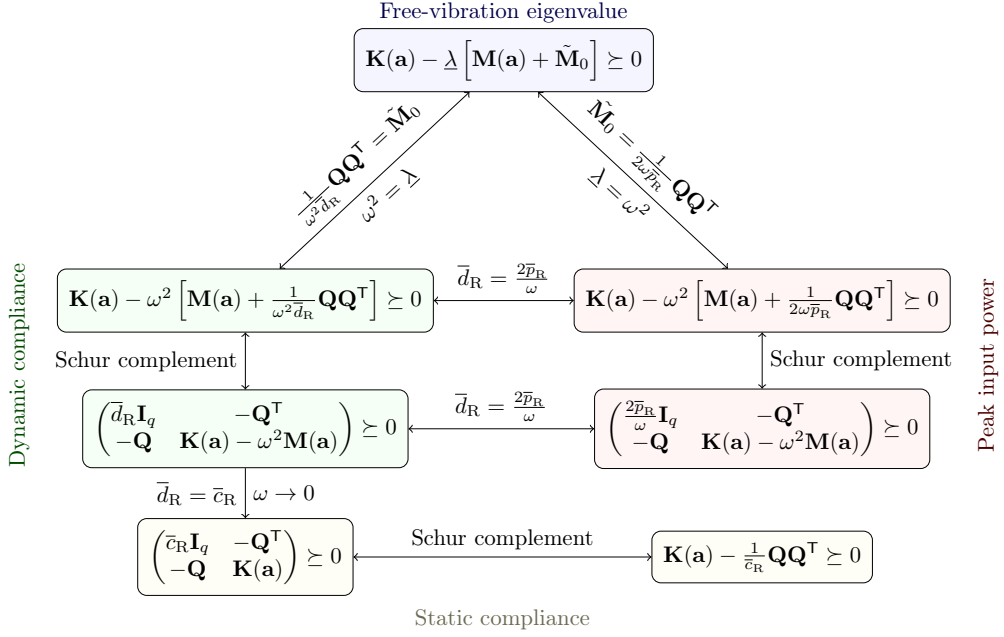
\begin{figure}[!htbp]
  \centering
  \scalebox{0.85}{
    \begin{tikzpicture}
      \node[draw, fill=blue!4, rectangle, rounded corners, inner sep=5pt] (fv) at (0,0.75) {$\mathbf{K}(\mathbf{a}) - \underline{\lambda}\left[\mathbf{M}(\mathbf{a}) + \tilde{\mathbf{M}}_0\right] \succeq 0$};
      \node[draw, fill=green!4, rectangle, rounded corners, inner sep=5pt] (dyncomp) at (-4,-3) {$\mathbf{K}(\mathbf{a}) - \omega^2\left[\mathbf{M}(\mathbf{a}) + \frac{1}{\omega^2\overline{d}_\R} \mathbf{Q} \mathbf{Q}\T\right] \succeq 0$};
      \node[draw, fill=green!4, rectangle, rounded corners, inner sep=5pt] (dyncomp2) at (-4,-5) {$
        \begin{pmatrix}\overline{d}_\R \mathbf{I}_q & - \mathbf{Q}\T\\ -\mathbf{Q} & \mathbf{K}(\mathbf{a}) - \omega^2 \mathbf{M}(\mathbf{a})
      \end{pmatrix} \succeq 0$};
      \node[draw, fill=yellow!4, rectangle, rounded corners, inner sep=5pt] (statcomp) at (-4,-7) {$
        \begin{pmatrix}\overline{c}_\R \mathbf{I}_q & - \mathbf{Q}\T\\ -\mathbf{Q} & \mathbf{K}(\mathbf{a})
      \end{pmatrix} \succeq 0$};
      \node[draw, fill=yellow!4, rectangle, rounded corners, inner sep=5pt] (statcomp2) at (4,-7) {$\mathbf{K}(\mathbf{a}) - \frac{1}{\overline{c}_\R}\mathbf{Q} \mathbf{Q}\T \succeq 0$};
      \node[draw, fill=red!4, rectangle, rounded corners, inner sep=5pt] (peakpower) at (4,-3) {$\mathbf{K}(\mathbf{a}) - \omega^2\left[\mathbf{M}(\mathbf{a}) + \frac{1}{2 \omega \overline{p}_\R} \mathbf{Q} \mathbf{Q}\T\right] \succeq 0$};
      \node[draw, fill=red!4, rectangle, rounded corners, inner sep=5pt] (peakpower2) at (4,-5) {$
        \begin{pmatrix}\frac{2 \overline{p}_\R}{\omega}\mathbf{I}_q & - \mathbf{Q}\T\\ -\mathbf{Q} & \mathbf{K}(\mathbf{a}) - \omega^2 \mathbf{M}(\mathbf{a})
      \end{pmatrix} \succeq 0$};
      \draw[<->] (fv) -- (dyncomp) node[midway,sloped,above] {$\frac{1}{\omega^2 \overline{d}_\R} \mathbf{Q} \mathbf{Q}\T = \tilde{\mathbf{M}}_0$} node[midway,sloped,below] {$\omega^2 = \underline{\lambda}$};
      \draw[<->] (fv) -- (peakpower) node[midway,sloped,above] {$\tilde{\mathbf{M}}_0 = \frac{1}{2\omega \overline{p}_\R} \mathbf{Q} \mathbf{Q}\T$} node[midway,sloped,below] {$\underline{\lambda} = \omega^2$};
      \draw[<->] (dyncomp) -- (peakpower) node[midway,above] {$\overline{d}_\R = \frac{2 \overline{p}_\R}{\omega}$};
      \draw[<->] (dyncomp) -- (dyncomp2) node[midway,left] {Schur complement};
      \draw[<->] (peakpower) -- (peakpower2) node[midway,right] {Schur complement};
      \draw[<->] (dyncomp2) -- (peakpower2) node[midway,above] {$\overline{d}_\R = \frac{2 \overline{p}_\R}{\omega}$};
      \draw[->] (dyncomp2) -- (statcomp) node[midway,left] {$\overline{d}_\R = \overline{c}_\R$} node[midway,right] {$\omega\rightarrow0$};
      \draw[<->] (statcomp) -- (statcomp2) node[midway,above] {Schur complement} node[midway,below=7mm] {{\textcolor{yellow!30!black}{Static compliance}}};
      \node[] at (-7.5,-4) {\rotatebox{90}{\textcolor{green!30!black}{Dynamic compliance}}};
      \node[] at (7.5,-4) {\rotatebox{90}{\textcolor{red!30!black}{Peak input power}}};
      \node[] at (0,1.5) {{\textcolor{blue!30!black}{Free-vibration eigenvalue}}};
  \end{tikzpicture}}
  \caption{Schematic illustration of the relationships between different constraint types.}
  \label{fig:constraint_relationships}
\end{figure}

\subsubsection{Relation between (robust) dynamic compliance and (robust) peak input power constraints}

We start by relating the robust dynamic compliance constraints \eqref{eq:robustdynamiccompliance} with the robust peak input power constraints \eqref{eq:robust_peakpower}. Clearly, both these constraints maintain the same structure and are equivalent for $\overline{d}_\R = \frac{2\overline{p}_\R}{\omega}$ if $\omega\in \mathbb{R}_{>0}$. As a direct consequence, we get the following simple result that also relates the state fields:
\begin{lemma}[Displacement-velocity relation]\label{lemma:displacement_velocity}
  Fix $\mathbf a\in\mathbb R_{\ge0}^{\nel}$ and $\omega>0$, and set $\mathbf S:=\mathbf K(\mathbf a)-\omega^2\mathbf M(\mathbf a)\in\mathbb S^{\ndof}$. For any $\mathbf{f}_\R \in\mathbb R^{\ndof}$ define the minimal-norm amplitudes $\mathbf{u}_\R := \mathbf{S}^\dagger \mathbf{f}_\R$ and $\mathbf{v}_\R:=\mathbf{S}^\dagger(\omega \mathbf{f}_\R)$. Then $\mathbf{v}_\R=\omega \mathbf{u}_\R$ and, with $d_\R:=\mathbf{f}_\R\T \mathbf{u}_\R$ and $p_\R:=\frac{1}{2} \mathbf{f}_\R\T \mathbf{v}_\R$, one has $d_\R=\frac{2p_\R}{\omega}$.
\end{lemma}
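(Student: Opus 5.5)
The plan is to argue directly from the linearity of the Moore--Penrose pseudoinverse as a matrix acting on vectors. I will not invoke the subresonance condition or range assumptions, since the statement defines $\mathbf{u}_\R$ and $\mathbf{v}_\R$ as minimal-norm amplitudes for arbitrary $\mathbf{f}_\R$. The pseudoinverse $\mathbf{S}^\dagger$ of Definition~\ref{def:pseudo-inverse} is a single, well-defined matrix determined by $\mathbf{S}$ alone. This holds because $\mathbf{S}$ depends only on the fixed $\mathbf{a}$ and $\omega$, and uniqueness in Definition~\ref{def:pseudo-inverse} gives a unique $\mathbf{S}^\dagger$. Hence the map $\mathbf{f}\mapsto\mathbf{S}^\dagger\mathbf{f}$ is linear.

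First, I would observe that $\omega$ is a real scalar. Matrix--vector multiplication then gives
\begin{equation*}
\mathbf{v}_\R=\mathbf{S}^\dagger(\omega\mathbf{f}_\R)=\omega\,\mathbf{S}^\dagger\mathbf{f}_\R=\omega\,\mathbf{u}_\R,
\end{equation*}
which is the first claim. Second, I would substitute this into the definition of $p_\R$:
\begin{equation*}
p_\R=\tfrac12\mathbf{f}_\R\T\mathbf{v}_\R=\tfrac{\omega}{2}\mathbf{f}_\R\T\mathbf{u}_\R=\tfrac{\omega}{2}d_\R.
\end{equation*}
Since $\omega>0$, dividing by $\omega/2$ yields $d_\R=2p_\R/\omega$.

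There is essentially no obstacle here. The only point to state explicitly is that the pseudoinverse is linear in its vector argument, which holds trivially because $\mathbf{S}^\dagger$ is a fixed matrix. Note that the pseudoinverse is \emph{not} linear in its matrix argument, since $(c\mathbf{S})^\dagger=c^{-1}\mathbf{S}^\dagger$; this causes no difficulty because the scalar $\omega$ multiplies the load vector, not $\mathbf{S}$.

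Finally, I would remark on consistency with earlier results. When $\mathbf{f}_\R=\mathbf{Q}\mathbf{r}_q$ as in Lemma~\ref{lemma:worstload}, the identity $d_\R=2p_\R/\omega$ matches the equivalence of \eqref{eq:robustdynamiccompliance} and \eqref{eq:robust_peakpower} under $\overline{d}_\R=2\overline{p}_\R/\omega$. This also agrees with the single-load expressions \eqref{eq:dcompfinal} and \eqref{eq:peakpowerfinal}.
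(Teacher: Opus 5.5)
Your proof is correct and is essentially the paper's own argument. Both use linearity of the fixed matrix $\mathbf{S}^\dagger$ to get $\mathbf{v}_\R=\omega\mathbf{u}_\R$, then substitute into $p_\R$ to obtain $p_\R=\frac{\omega}{2}d_\R$, i.e., $d_\R=2p_\R/\omega$.
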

\begin{proof}
  By linearity of the map $\mathbf{x} \mapsto \mathbf{S}^\dagger \mathbf{x}$, $\mathbf{v}_\R=\omega \mathbf{S}^\dagger \mathbf{f}_\R=\omega \mathbf{u}_\R$. Hence $p_\R=\frac{1}{2} \mathbf{f}_\R\T(\omega \mathbf{u}_\R) = \frac{\omega}{2} d_\R$.
\end{proof}
Despite these equivalences at the robust-constraint level, dynamic compliance and input power are distinct time-domain functionals---cf. \eqref{eq:dynamiccompliance} vs. \eqref{eq:peakpower}---so their instantaneous histories and peaks occur at different phases.

\subsubsection{Relation between (robust) dynamic compliance, (robust) peak input power constraints, and fundamental free-vibration eigenvalue constraints}

In the previous section, we have shown that the robust dynamic compliance and robust peak input power constraints are equivalent for $\omega \in \mathbb{R}_{>0}$ and a specific parameter choice. To relate these constraints to the fundamental free-vibration eigenvalue constraints \eqref{eq:sdp_fv:fv} under $\omega\in \mathbb{R}_{>0}$, we only need to consider one of them. For this, we choose the robust dynamic compliance constraint \eqref{eq:robustdynamiccompliance} and show that it can be written in the form of \eqref{eq:sdp_fv:fv}, and vice versa.

\begin{proposition}
  Constraints \eqref{eq:robustdynamiccompliance} and \eqref{eq:sdp_fv:fv} are equivalent up to a parameter reparametrization.
\end{proposition}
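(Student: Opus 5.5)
The plan is to apply the Schur complement in the \emph{other} direction from the one used to derive \eqref{eq:robustdynamiccompliance}: eliminate the block $\mathbf{K}(\mathbf{a})-\omega^2\mathbf{M}(\mathbf{a})$ instead of the block $\overline{d}_\R\mathbf{I}_q$. Since $\overline{d}_\R>0$, the upper-left block $\overline{d}_\R\mathbf{I}_q$ is positive definite. After a symmetric permutation of block rows and columns, which preserves positive semidefiniteness, Lemma~\ref{lem:schur_complement} applies with $\mathbf{C}=\overline{d}_\R\mathbf{I}_q$. This gives
\begin{equation*}
\eqref{eq:robustdynamiccompliance}\iff \mathbf{K}(\mathbf{a})-\omega^2\mathbf{M}(\mathbf{a})-\frac{1}{\overline{d}_\R}\mathbf{Q}\mathbf{Q}\T\succeq 0
\iff \mathbf{K}(\mathbf{a})-\omega^2\Big[\mathbf{M}(\mathbf{a})+\frac{1}{\omega^2\overline{d}_\R}\mathbf{Q}\mathbf{Q}\T\Big]\succeq 0 .
\end{equation*}
Here we use $\omega>0$ to factor out $\omega^2$. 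The ordinary Schur complement suffices, so the range condition of Lemma~\ref{lem:gen_schur_complement} plays no role in this direction.

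Next, the reparametrization. Set $\underline{\lambda}:=\omega^2$ and $\tilde{\mathbf{M}}_0:=\frac{1}{\omega^2\overline{d}_\R}\mathbf{Q}\mathbf{Q}\T\in\mathbb{S}^{\ndof}_{\succeq0}$. This matrix is design-independent, so it can be absorbed into the nonstructural mass, $\mathbf{M}^{(0)}\mapsto\mathbf{M}^{(0)}+\tilde{\mathbf{M}}_0$. The augmented mass matrix keeps the form \eqref{eq:mass}, and the constraint becomes exactly \eqref{eq:sdp_fv:fv}, i.e.\ $\mathbf{K}(\mathbf{a})-\underline{\lambda}[\mathbf{M}(\mathbf{a})+\tilde{\mathbf{M}}_0]\succeq0$.

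For the converse, start from \eqref{eq:sdp_fv:fv} with a mass term that splits as $\mathbf{M}(\mathbf{a})+\tilde{\mathbf{M}}_0$, where $\tilde{\mathbf{M}}_0\succeq0$, and $\underline{\lambda}>0$. Factor $\tilde{\mathbf{M}}_0=\mathbf{Q}\mathbf{Q}\T$, for instance by an eigendecomposition or Cholesky factorization, with $q=\operatorname{rank}\tilde{\mathbf{M}}_0$ or $q=\ndof$. Choose $\omega:=\sqrt{\underline{\lambda}}$ and $\overline{d}_\R:=1/\underline{\lambda}$; the equality $\frac{1}{\omega^2\overline{d}_\R}\mathbf{Q}\mathbf{Q}\T=\tilde{\mathbf{M}}_0$ then holds. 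More generally, any pair $(\mathbf{Q},\overline{d}_\R)$ with $\mathbf{Q}\mathbf{Q}\T=\underline{\lambda}\,\overline{d}_\R\,\tilde{\mathbf{M}}_0$ works. Reversing the Schur step recovers \eqref{eq:robustdynamiccompliance}.

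The main subtlety is ensuring that the two formulations agree as constraints on the original robust dynamic-compliance requirement, and not merely as matrix inequalities. Constraint \eqref{eq:robustdynamiccompliance} was derived under $\mathbf{K}(\mathbf{a})-\omega^2\mathbf{M}(\mathbf{a})\succeq0$ and $\range{\mathbf{Q}}\subseteq\range{\mathbf{K}(\mathbf{a})-\omega^2\mathbf{M}(\mathbf{a})}$. I will check that both conditions are implied by the eigenvalue form. The first holds because $\mathbf{Q}\mathbf{Q}\T\succeq0$ gives $\mathbf{K}-\omega^2\mathbf{M}\succeq\frac{1}{\overline{d}_\R}\mathbf{Q}\mathbf{Q}\T\succeq0$. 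The second follows from the generalized Schur complement (Lemma~\ref{lem:gen_schur_complement}) applied to the same block matrix. It can also be seen directly: if $\mathbf{x}\in\kernel{\mathbf{K}-\omega^2\mathbf{M}}$, then $0\ge\frac{1}{\overline{d}_\R}\lVert\mathbf{Q}\T\mathbf{x}\rVert^2$, so $\mathbf{Q}\T\mathbf{x}=\mathbf{0}$. The same argument with $\overline{d}_\R=2\overline{p}_\R/\omega$ transfers the result to \eqref{eq:robust_peakpower}.
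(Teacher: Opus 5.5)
Your proof is correct and follows the paper's route. It uses the ordinary Schur complement with respect to the positive definite block $\overline{d}_\R\mathbf{I}_q$ and the reparametrization $\underline{\lambda}=\omega^2$, with the design-independent mass $\frac{1}{\omega^2\overline{d}_\R}\mathbf{Q}\mathbf{Q}\T$ absorbed into the nonstructural mass; the converse factors the nonstructural mass as $\mathbf{Q}\mathbf{Q}\T$ and sets $\omega=\sqrt{\underline{\lambda}}$, $\overline{d}_\R=1/\underline{\lambda}$. Your extra check that the eigenvalue form implies $\mathbf{K}(\mathbf{a})-\omega^2\mathbf{M}(\mathbf{a})\succeq0$ and $\range{\mathbf{Q}}\subseteq\range{\mathbf{K}(\mathbf{a})-\omega^2\mathbf{M}(\mathbf{a})}$ is not in the paper but is sound; only your opening phrase is inverted, since the block you actually eliminate (invert) is $\overline{d}_\R\mathbf{I}_q$, not $\mathbf{K}(\mathbf{a})-\omega^2\mathbf{M}(\mathbf{a})$.
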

\begin{proof}
  $\Rightarrow$ Let \eqref{eq:robustdynamiccompliance} hold for $\overline{d}_\R \in \mathbb{R}_{>0}$. Then, after using Schur complement (Lemma \ref{lem:schur_complement}) at the top-left block $\overline d_\R \mathbf{I}_q\succ0$ yields
  \begin{equation}
    \mathbf{K}(\mathbf{a}) - \underline{\lambda} \left[\tilde{\mathbf{M}}_0 +  \sum_{e=1}^{\nel} a_e \mathbf{M}_e ^{(1)} \right]  \succeq 0
  \end{equation}
  with $\underline{\lambda} = \omega^2$ and $\tilde{\mathbf{M}}_0 = \mathbf{M}_0 + \frac{1}{\omega^2 \overline{d}_\R} \mathbf{Q} \mathbf{Q}\T$. This is exactly the form of \eqref{eq:sdp_fv:fv}.

  $\Leftarrow$ Let \eqref{eq:sdp_fv:fv} hold with $\underline{\lambda}>0$ and factor $\mathbf{M}_0=\hat{\mathbf{Q}}\hat{\mathbf{Q}}\T$. Then $\mathbf K(\mathbf a)-\underline\lambda\sum_{e=1}^{\nel} a_e\mathbf M^{(1)}_e-\underline\lambda\hat{\mathbf{Q}}\hat{\mathbf{Q}}\T\succeq0$,
  which is equivalent (reverse Schur complement w.r.t.\ $(1/\underline\lambda)\mathbf{I}_q$) to
  \begin{equation}
    \begin{pmatrix}
      \frac{1}{\underline{\lambda}}\mathbf{I}_q & -\hat{\mathbf{Q}}\T\\
      -\hat{\mathbf{Q}} & \mathbf{K}(\mathbf{a})-\underline{\lambda}\sum_{e=1}^{\nel} a_e \mathbf{M}^{(1)}_e
    \end{pmatrix} \succeq 0,
  \end{equation}
  i.e., \eqref{eq:robustdynamiccompliance} with $\omega=\sqrt{\underline\lambda}$, $\mathbf{Q} = \hat{\mathbf{Q}}$, and $\overline d_\R=1/\underline\lambda$.  
\end{proof}

Having related the robust dynamic compliance and fundamental free-vibration eigenvalue constraints, we further prove that the displacement field of the robust dynamic compliance problem related to the worst-case response is also the eigenvector associated with the lowest nonzero eigenvalue of the free-vibration eigenvalue problem.

\begin{lemma}[Displacement-eigenmode relation]\label{lemma:dyncomp_eigenvec}
Fix $\mathbf{a}\in\mathbb R_{\ge0}^{\nel}$ and $\omega>0$, and set $\underline{\lambda}:=\omega^2$ and
$\mathbf{S}:=\mathbf{K}(\mathbf{a})-\underline{\lambda}\mathbf{M}(\mathbf{a})$.
Assume the robust dynamic-compliance inequality \eqref{eq:robustdynamiccompliance} holds with bound $d_\R>0$,
and let the worst-case load be $\mathbf{f}_\R= \mathbf{Q} \mathbf{r}_{q}$ and
$\mathbf{u}_\R:=\mathbf{S}^\dagger\mathbf{f}_\R$.
Then $\mathbf{u}_\R\neq \mathbf{0}$ satisfies
\[
\mathbf{K}(\mathbf{a}) \mathbf{u}_\R
=\underline{\lambda}\Big(\mathbf{M}(\mathbf{a})+\frac{1}{\underline{\lambda} d_\R} \mathbf{Q}\mathbf{Q}\T\Big)\mathbf{u}_\R,
\]
i.e., $\mathbf{u}_\R$ is an eigenvector and it is associated with the smallest positive eigenvalue $\mu=\underline{\lambda}$.
\end{lemma}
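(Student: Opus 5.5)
The plan is to read $d_\R$ in the statement as the \emph{tight} worst-case value, $d_\R=\lambda_{\max}(\mathbf{A})$ with $\mathbf{A}=\mathbf{Q}\T\mathbf{S}^\dagger\mathbf{Q}$ as in Lemma~\ref{lemma:worstload}. This is the only reading under which the displayed eigen-equation can hold exactly. If a strictly larger bound $\overline d_\R$ were used, the argument below would give $\mathbf{Q}\mathbf{Q}\T\mathbf{u}_\R=\lambda_{\max}(\mathbf{A})\,\mathbf{f}_\R$, which is a different multiple. I take $\mathbf{r}_q$ to be a unit eigenvector for $\lambda_{\max}(\mathbf{A})$, so that $\mathbf{A}\mathbf{r}_q=d_\R\mathbf{r}_q$. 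I also note that \eqref{eq:robustdynamiccompliance} with bound $\overline d_\R\ge\lambda_{\max}(\mathbf{A})$ implies, by Lemma~\ref{lem:gen_schur_complement} applied with $\mathbf{S}\succeq 0$, that $\overline d_\R\mathbf{I}_q-\mathbf{A}\succeq0$; in particular $d_\R\mathbf{I}_q-\mathbf{A}\succeq 0$ for the tight value as well.

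\textbf{Step 1 (state equation).} From Lemma~\ref{lem:gen_schur_complement} applied to \eqref{eq:robustdynamiccompliance}, we also get $\range{\mathbf{Q}}\subseteq\range{\mathbf{S}}$. Since $\mathbf{S}\mathbf{S}^\dagger$ is the orthogonal projector onto $\range{\mathbf{S}}$ (Definition~\ref{def:pseudo-inverse}), this gives $\mathbf{S}\mathbf{u}_\R=\mathbf{S}\mathbf{S}^\dagger\mathbf{Q}\mathbf{r}_q=\mathbf{f}_\R$.

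\textbf{Step 2 (key identity).} Compute $\mathbf{Q}\T\mathbf{u}_\R=\mathbf{Q}\T\mathbf{S}^\dagger\mathbf{Q}\mathbf{r}_q=\mathbf{A}\mathbf{r}_q=d_\R\mathbf{r}_q$. Multiplying by $\mathbf{Q}$ gives $\mathbf{Q}\mathbf{Q}\T\mathbf{u}_\R=d_\R\mathbf{f}_\R=d_\R\mathbf{S}\mathbf{u}_\R$. Dividing by $d_\R>0$ and expanding $\mathbf{S}=\mathbf{K}(\mathbf{a})-\underline\lambda\mathbf{M}(\mathbf{a})$ yields exactly
\[
\mathbf{K}(\mathbf{a})\mathbf{u}_\R=\underline\lambda\Big(\mathbf{M}(\mathbf{a})+\tfrac{1}{\underline\lambda d_\R}\mathbf{Q}\mathbf{Q}\T\Big)\mathbf{u}_\R .
\]
For nontriviality, $\mathbf{f}_\R\T\mathbf{u}_\R=\mathbf{r}_q\T\mathbf{A}\mathbf{r}_q=d_\R>0$, so $\mathbf{u}_\R\neq\mathbf{0}$. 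Moreover $\mathbf{u}_\R\T\mathbf{Q}\mathbf{Q}\T\mathbf{u}_\R=d_\R^2>0$. Hence $\mathbf{u}_\R\notin\kernel{\tilde{\mathbf{M}}}$, where $\tilde{\mathbf{M}}:=\mathbf{M}(\mathbf{a})+\frac{1}{\underline\lambda d_\R}\mathbf{Q}\mathbf{Q}\T$, and the associated eigenvalue $\mu=\underline\lambda$ is positive and genuine.

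\textbf{Step 3 (minimality).} Because $d_\R\mathbf{I}_q\succ0$, Lemma~\ref{lem:schur_complement} converts the block inequality \eqref{eq:robustdynamiccompliance} with the tight bound $d_\R$ into $\mathbf{K}(\mathbf{a})-\underline\lambda\tilde{\mathbf{M}}\succeq0$. This block form is valid: $\mathbf{S}\succeq0$, $\range{\mathbf{Q}}\subseteq\range{\mathbf{S}}$ and $d_\R\mathbf{I}_q-\mathbf{A}\succeq0$, so by Lemma~\ref{lem:gen_schur_complement} \eqref{eq:robustdynamiccompliance} holds with $d_\R$ in place of $\overline d_\R$. It follows that the Rayleigh quotient $\mathbf{w}\T\mathbf{K}(\mathbf{a})\mathbf{w}/\mathbf{w}\T\tilde{\mathbf{M}}\mathbf{w}$ is at least $\underline\lambda$ for every $\mathbf{w}\notin\kernel{\tilde{\mathbf{M}}}$, exactly as in \eqref{eq:rayleigh}. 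The vector $\mathbf{u}_\R$ attains $\underline\lambda$, so $\underline\lambda$ is the smallest positive eigenvalue of the augmented pencil.

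I expect the main obstacle to be bookkeeping rather than depth. The delicate points are justifying $\mathbf{S}\mathbf{u}_\R=\mathbf{f}_\R$ through the range inclusion when $\mathbf{S}$ is singular, which happens when $\mathbf{a}$ has zero entries, and making precise that $d_\R$ must equal $\lambda_{\max}(\mathbf{A})$ rather than an arbitrary admissible bound.
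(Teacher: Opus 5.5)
Your proof is correct and follows essentially the same route as the paper. The eigen-equation comes from the identity $\mathbf{Q}\mathbf{Q}\T\mathbf{S}^\dagger\mathbf{f}_\R=d_\R\mathbf{f}_\R$: the paper obtains it via the eigendecomposition of $\mathbf{A}$ and the $\mathbf{S}^\dagger$-orthogonality of the columns of $\mathbf{F}=\mathbf{Q}\mathbf{R}$, while you obtain it more directly from $\mathbf{Q}\T\mathbf{u}_\R=\mathbf{A}\mathbf{r}_q$. Minimality comes from feasibility of $\mu=\underline{\lambda}$ in $\mathbf{K}(\mathbf{a})-\mu\tilde{\mathbf{M}}\succeq 0$ via the Schur complement, as in the paper; your reading of $d_\R$ as the tight value $\lambda_{\max}(\mathbf{A})$ is exactly what the paper uses, and you make explicit the steps it leaves implicit ($\mathbf{S}\mathbf{S}^\dagger\mathbf{f}_\R=\mathbf{f}_\R$ from the range inclusion, and $\mathbf{u}_\R\notin\kernel{\tilde{\mathbf{M}}}$).
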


\begin{proof}
  Consider the eigendecomposition $\mathbf{Q}\T\left[\mathbf{K}(\mathbf{a}) - \omega^2 \mathbf{M}(\mathbf{a})\right]^\dagger\mathbf{Q} = \mathbf{R} \mathbf{\Lambda} \mathbf{R}\T$, so that
  \begin{equation}
    \mathbf{\Lambda} = \mathbf{R}\T \mathbf{Q}\T \left[\mathbf{K}(\mathbf{a}) - \omega^2 \mathbf{M}(\mathbf{a})\right]^\dagger \mathbf{Q}\mathbf{R}
  \end{equation}
  with the diagonal matrix of eigenvalues $\mathbf{\Lambda}$. Clearly, the vectors $\mathbf{f}_1 = \mathbf{Q} \mathbf{r}_1, \dots, \mathbf{f}_\R = \mathbf{f}_q = \mathbf{Q} \mathbf{r}_q$ (which we further collect in a matrix $\mathbf{F}$) are $\left[\mathbf{K}(\mathbf{a}) - \omega^2 \mathbf{M}(\mathbf{a})\right]^\dagger$-orthogonal, where $\mathbf{r}_q$ is the orthonormal eigenvector related to the maximal eigenvalue in $\mathbf{\Lambda}$. In addition, we also have $\mathbf{F}\mathbf{F}\T = \mathbf{Q} \mathbf{R} \mathbf{R}\T \mathbf{Q}\T = \mathbf{Q} \mathbf{Q}\T$ due to the orthonormality of the columns of $\mathbf{R}$.

  To prove the statement, we first show that $\mathbf{u}_\R$ is an eigenvector, i.e.,
  \begin{equation}
    \mathbf{K}(\mathbf{a}){\mathbf{u}_\R} - \mu \left[ \frac{1}{\underline{\lambda} {d}_\R} \mathbf{Q} \mathbf{Q}\T + \mathbf{M}(\mathbf{a})\right] {\mathbf{u}_\R} = \mathbf{0}
  \end{equation}
  for an eigenvalue $\mu \in \mathbb{R}_{>0}$.
  To this goal, we rewrite the left hand side as
  \begin{equation}
    \left(\mathbf{K}(\mathbf{a}) - \mu\mathbf{M}(\mathbf{a})\right) {\mathbf{u}}_\R - \frac{\mu}{\underline{\lambda} d_\R} \mathbf{F} \mathbf{F}\T\mathbf{u}_\R
  \end{equation}
  and as
  \begin{equation}
    \left(\mathbf{K}(\mathbf{a}) - \mu\mathbf{M}(\mathbf{a})\right) \left( \mathbf{K}(\mathbf{a}) - \underline{\lambda} \mathbf{M}(\mathbf{a}) \right)^\dagger \mathbf{f}_\R - \frac{\mu}{\underline{\lambda} d_\R} \mathbf{F}\mathbf{F}\T\left( \mathbf{K}(\mathbf{a}) - \underline{\lambda} \mathbf{M}(\mathbf{a}) \right)^\dagger \mathbf{f}_\R.
  \end{equation}
  Recalling that $\mathbf{f}_\R = \mathbf{f}_q$, that  $\mathbf{f}_1 \dots \mathbf{f}_q$ are $\left[\mathbf{K}(\mathbf{a}) - \omega^2 \mathbf{M}(\mathbf{a})\right]^\dagger$-orthogonal and $d_\R = \mathbf{f}_\R\T \left[\mathbf{K}(\mathbf{a}) - \omega^2 \mathbf{M}(\mathbf{a})\right]^\dagger \mathbf{f}_\R$, we obtain
  \begin{equation}
    \left(\mathbf{K}(\mathbf{a}) - \mu\mathbf{M}(\mathbf{a})\right) \left( \mathbf{K}(\mathbf{a}) - \underline{\lambda} \mathbf{M}(\mathbf{a}) \right)^\dagger \mathbf{f}_\R - \frac{\mu}{\underline{\lambda}} \mathbf{f}_\R
  \end{equation}
  which evaluates as $\mathbf{0}$ if $\mu = \underline{\lambda}$. This shows that $\underline{\lambda}$ is an eigenvalue associated with the eigenvector $\mathbf{u}_\R$.

  Second, we need to show that $\mu= \underline{\lambda}$ is particularly the smallest eigenvalue. For this, we observe that the smallest positive eigenvalue is the best lower-bound $\mu_{\min}$ to all eigenvalues \citep{BenTal2001}, i.e.,
  \begin{equation}\label{eq:smallesteig}
    \mu_{\min} = \max \left\{\mu \in \mathbb{R}:  \mathbf{K}(\mathbf{a}) - \mu \left[\frac{1}{\underline{\lambda} d_\R} \mathbf{Q} \mathbf{Q}\T + \mathbf{M}(\mathbf{a}) \right]  \succeq0\right\}.
  \end{equation}
  Notice that the constraint in \eqref{eq:smallesteig} is a linear matrix inequality because it is considered for a fixed $\mathbf{a}$. Using Schur complement lemma and recalling that $\mathbf{Q} \mathbf{Q}\T = \mathbf{F} \mathbf{F}\T$, the matrix inequality is equivalent to
  \begin{equation}
    \begin{pmatrix}
      \frac{\underline{\lambda} d_\R}{\mu} \mathbf{I}_q& \mathbf{F}\T\\
      \mathbf{F} & \mathbf{K}(\mathbf{a}) - \mu \mathbf{M}(\mathbf{a})
    \end{pmatrix} \succeq 0
    \quad \Leftrightarrow \quad
    \frac{\underline{\lambda} d_\R}{\mu} \mathbf{I}_q - \mathbf{F}\T \left[\mathbf{K}(\mathbf{a}) - \mu \mathbf{M}(\mathbf{a})\right]^\dagger \mathbf{F} \succeq 0.
  \end{equation}
  Now test $\mu = \underline{\lambda}$, which gives
  \begin{equation}
    d_\R \mathbf{I}_q - \mathbf{F}\T \left[\mathbf{K}(\mathbf{a}) - \underline{\lambda} \mathbf{M}(\mathbf{a})\right]^\dagger \mathbf{F} \succeq 0;
  \end{equation}
  this is positive semidefinite due to equivalence to \eqref{eq:robustdynamiccompliance}. Therefore, $\underline{\lambda}$ is feasible for \eqref{eq:smallesteig} so that $\mu_{\min} \ge \underline{\lambda}$. Since we have already shown that $\underline{\lambda}$ is an eigenvalue, $\mu_{\min} \le \underline{\lambda}$. Combining both results, we conclude that $\mu_{\min} = \underline{\lambda}$, and $\mathbf{u}_\R$ is its eigenvector.
\end{proof}

An analogous result holds for the peak input power constraints as well.

\begin{lemma}[Velocity-eigenmode relation]\label{lemma:peakpower_eigenvec}
Fix $\mathbf{a}\in\mathbb R_{\ge0}^{\nel}$ and $\omega>0$, and set $\underline{\lambda}:=\omega^2$ and
$\mathbf{S}:=\mathbf{K}(\mathbf{a})-\underline{\lambda}\mathbf{M}(\mathbf{a})$.
Assume the robust peak-input-power inequality \eqref{eq:robust_peakpower} holds with bound $p_\R>0$,
and let the worst-case load be $\mathbf{f}_\R= \mathbf{Q} \mathbf{r}_{q}$ and
$\mathbf{v}_\R:=\mathbf{S}^\dagger(\omega\mathbf{f}_\R)$.
Then $\mathbf{v}_\R\neq \mathbf{0}$ satisfies
\[
\mathbf K(\mathbf{a})\mathbf{v}_\R
=\underline{\lambda}\Big(\mathbf{M}(\mathbf{a})+\frac{1}{2\omega p_\R}\,\mathbf{Q}\mathbf{Q}\T\Big)\mathbf{v}_\R,
\]
i.e., $\mathbf{v}_\R$ is an eigenvector and it is associated with the smallest positive eigenvalue $\mu=\underline{\lambda}$.
\end{lemma}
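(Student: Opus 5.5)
The plan is to reduce the velocity statement to Lemma~\ref{lemma:dyncomp_eigenvec}, using the dictionary from Lemma~\ref{lemma:displacement_velocity}. Here $p_\R$ plays the role of the bound in \eqref{eq:robust_peakpower}. Setting $d_\R := 2p_\R/\omega$, the inequality \eqref{eq:robust_peakpower} becomes literally \eqref{eq:robustdynamiccompliance} with bound $d_\R>0$, since the top-left blocks coincide: $\frac{2p_\R}{\omega}\mathbf{I}_q = d_\R\mathbf{I}_q$.

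Moreover, the worst-case load $\mathbf{f}_\R=\mathbf{Q}\mathbf{r}_q$ is the same in both settings. It is the top eigenvector of $\mathbf{A}=\mathbf{Q}\T\mathbf{S}^\dagger\mathbf{Q}$, and the scaling by $\omega/2$ does not change eigenvectors. Hence Lemma~\ref{lemma:dyncomp_eigenvec} applies with $\mathbf{u}_\R=\mathbf{S}^\dagger\mathbf{f}_\R\neq\mathbf{0}$ and yields
\[
\mathbf{K}(\mathbf{a})\mathbf{u}_\R=\underline{\lambda}\Big(\mathbf{M}(\mathbf{a})+\frac{1}{\underline{\lambda}d_\R}\mathbf{Q}\mathbf{Q}\T\Big)\mathbf{u}_\R ,
\]
with $\underline{\lambda}$ the smallest positive eigenvalue of this pencil.

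Next, translate the constants. We have $\underline{\lambda}d_\R=\omega^2\cdot 2p_\R/\omega=2\omega p_\R$, so the augmented mass is exactly $\mathbf{M}(\mathbf{a})+\frac{1}{2\omega p_\R}\mathbf{Q}\mathbf{Q}\T$, matching the statement. Lemma~\ref{lemma:displacement_velocity} gives $\mathbf{v}_\R=\omega\mathbf{u}_\R$. Because the eigen-equation is linear and homogeneous, multiplying by $\omega>0$ preserves it, and $\mathbf{v}_\R\neq\mathbf{0}$ since $\mathbf{u}_\R\neq\mathbf{0}$. The characterization of the smallest positive eigenvalue is a property of the pencil alone, not of the vector. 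So $\mu=\underline{\lambda}$ remains the smallest positive eigenvalue, now with eigenvector $\mathbf{v}_\R$.

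The main point of care is the identification of $d_\R$ in Lemma~\ref{lemma:dyncomp_eigenvec}. In that proof it is used both as the bound in \eqref{eq:robustdynamiccompliance} and as the attained value $\mathbf{f}_\R\T\mathbf{S}^\dagger\mathbf{f}_\R=\lambda_{\max}(\mathbf{A})$. The eigen-identity needs the attained value; the minimality argument needs the bound. I would therefore read $p_\R$ the same way, as the worst-case peak power $\frac{\omega}{2}\lambda_{\max}(\mathbf{A})$, which is admissible for \eqref{eq:robust_peakpower} by \eqref{eq:robust_peakpower_sphere}. Then $d_\R=2p_\R/\omega=\lambda_{\max}(\mathbf{A})$ serves both roles simultaneously, and the reduction goes through verbatim. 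Nonvanishing of $\mathbf{u}_\R$ is inherited from $\lambda_{\max}(\mathbf{A})=d_\R>0$, which forces $\mathbf{S}^\dagger\mathbf{f}_\R\neq\mathbf{0}$.
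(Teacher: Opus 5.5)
Your proposal is correct and matches the paper's proof, which consists exactly of combining $\mathbf{v}_\R=\omega\mathbf{u}_\R$ from Lemma~\ref{lemma:displacement_velocity} with Lemma~\ref{lemma:dyncomp_eigenvec} under the identification $d_\R=2p_\R/\omega$. Your additional points make explicit what the paper leaves implicit: the constant check $\underline{\lambda}d_\R=2\omega p_\R$, and reading $p_\R$ as the attained value $\frac{\omega}{2}\lambda_{\max}(\mathbf{A})$ rather than an arbitrary bound. That reading is consistent with the paper's distinction between $p_\R$ and $\overline{p}_\R$, and it is genuinely needed, since for a strict bound the eigen-identity leaves a residual $\bigl(1-\lambda_{\max}(\mathbf{A})/d_\R\bigr)\mathbf{f}_\R$.
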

\begin{proof}
  The proof follows from the fact that $\mathbf{u}_\R = \frac{1}{\omega} \mathbf{v}_\R$ due to Lemma \ref{lemma:displacement_velocity} and that $\mathbf{u}_\R$ is the eigenvector as shown in Lemma \ref{lemma:dyncomp_eigenvec}.
\end{proof}


\subsubsection{Relation between (robust) dynamic compliance and (robust) static compliance constraints}

Finally, we also relate the robust dynamic compliance constraint \eqref{eq:robustdynamiccompliance} to the static one \citep{Tyburec2023global}. Specifically, since all blocks depend continuously on $\omega$, the limit $\omega\rightarrow0$ of the block-PSD condition exists and yields the robust static compliance constraint of \citet{BenTal1997}
\begin{equation}\label{eq:robustcompliance}
  \begin{pmatrix}
    \overline{c}_\R\mathbf{I}_q & - \mathbf{Q}\T\\
    -\mathbf{Q} & \mathbf{K}(\mathbf{a})
  \end{pmatrix} \succeq 0
\end{equation}
which enforces the worst-case static compliance to be bounded from above by $\overline{c}_\R \in \mathbb{R}_{>0}$.  After using Lemma \ref{lem:schur_complement}, \eqref{eq:robustcompliance} can be written in the form
\begin{equation}\label{eq:robustcompliance_fv}
  \mathbf{K}(\mathbf{a}) - \mathbf{M}_0 \succeq 0
\end{equation}
with $\mathbf{M}_0 = \frac{1}{\overline{c}_\R}\mathbf{Q} \mathbf{Q}\T$. As already observed by \citet{Kanno2015}, this is a particular instance of the free-vibration eigenvalue constraint \eqref{eq:sdp_fv:fv}. This reveals that the results summarized in Section \ref{sec:freevib} also apply to the static compliance case. However, we note here that a slightly more general result for the robust static case, allowing to include design-independent stiffness, follows from an analysis of the static compliance function \citep{Tyburec2023global}.


\section{Numerical examples}\label{sec:examples}

In this section, we illustrate the theoretical results through numerical examples. The first problem is a $10$-segment frame structure that was previously studied in \citep{Ni2014}. We investigate three equivalent formulations of this problem: free-vibration eigenvalue, robust dynamic compliance, and robust peak input power constraints, and provide a certified global minimizer. The second example is a $35$-segment tower structure also introduced in \citep{Ni2014}. Here, we solve only the free-vibration eigenvalue formulation and interpret the results in terms of the peak input power and dynamic compliance constraints. Final example involves a $12$-segment structure with rectangular cross-sections; we provide a near-optimal solution and validate the design performance through experimental validation.

All calculations were performed on a Dell Latitude 5450 personal laptop equipped with $64$~GB RAM and the Intel$^{\text{\textregistered}}$ Core\texttrademark~Ultra~7 165H processor. MATLAB implementation and input files are available in \href{https://gitlab.com/open-mechanics/software/pof-dyna}{https://gitlab.com/open-mechanics/software/pof-dyna}. Semidefinite programs were solved using the \textsc{Mosek} optimizer with default parameters. We used the non-mixed-term (NMT) monomial basis \eqref{eq:nmt} throughout, with Archimedean compactification \eqref{eq:compact}. Upper bounds were refined by inhouse sequential semidefinite programming solver initialized at the scaled first-order-moment design \eqref{eq:scaling}.

\subsection{10-segment frame}

As the first illustration, we consider a $10$-segment problem introduced in \citep[Section 5.1]{Ni2014}. This benchmark relies on the Koloušek method for dynamic analysis, which yields exact natural frequencies of the continuous frame model considered therein. In contrast, our study adopts a fixed discretization with Euler--Bernoulli finite elements; the resulting eigenfrequencies are thus approximate with respect to the continuous model. Consequently, our global optimality statements apply only to the discretized problem we solve.

\begin{figure}[!b]
\centering
\begin{tikzpicture}
\scaling{2.5};
\point{n1}{0.000000}{0.000000}
\point{n2}{1.000000}{0.000000}
\point{n3}{2.000000}{0.000000}
\point{n4}{0.000000}{1.000000}
\point{n5}{1.000000}{1.000000}
\point{n6}{2.000000}{1.000000}
\point{n7}{0.500000}{0.000000}
\point{n8}{1.500000}{0.000000}
\point{n9}{0.450000}{0.450000}
\point{n9b}{0.55}{0.55}
\point{n10}{0.500000}{0.500000}
\point{n11}{1.000000}{0.500000}
\point{n12}{1.450000}{0.450000}
\point{n12b}{1.550000}{0.550000}
\point{n13}{1.500000}{0.500000}
\point{n14}{2.000000}{0.500000}
\point{n15}{0.500000}{1.000000}
\point{n16}{1.500000}{1.000000}
\beam{2}{n4}{n15}
\beam{2}{n5}{n15}
\notation{4}{n4}{n5}[$1$]
\beam{2}{n4}{n10}
\notation{4}{n4}{n10}[$2$]
\beam{2}{n2}{n10}
\beam{2}{n1}{n9}
\notation{4}{n1}{n9}[$3$]
\beam{2}{n5}{n9b}
\beam{2}{n1}{n7}
\notation{4}{n1}{n2}[$4$][0.55]
\beam{2}{n2}{n7}
\beam{2}{n2}{n11}
\notation{4}{n2}{n5}[$5$][0.55]
\beam{2}{n5}{n11}
\beam{2}{n5}{n16}
\notation{4}{n5}{n6}[$6$]
\beam{2}{n6}{n16}
\beam{2}{n5}{n13}
\notation{4}{n5}{n13}[$7$]
\beam{2}{n3}{n13}
\beam{2}{n2}{n12}
\notation{4}{n2}{n12}[$8$]
\beam{2}{n6}{n12b}
\beam{2}{n2}{n8}
\notation{4}{n2}{n3}[$9$]
\beam{2}{n3}{n8}
\beam{2}{n3}{n14}
\notation{4}{n3}{n6}[$10$]
\beam{2}{n6}{n14}
\draw[ultra thick] (n9b) arc[start angle=45, end angle=225, radius=0.175];
\draw[ultra thick] (n12b) arc[start angle=45, end angle=225, radius=0.175];
\support{3}{n1}[270];
\support{3}{n4}[270];
\begin{scope}[color=red]
\draw[red,fill=red] (n2) circle (.75ex);
\notation{1}{n2}{$m=10$~kg}[below left];
\draw[red,fill=red] (n6) circle (.75ex);
\notation{1}{n6}{$m=10$~kg}[above right];
\end{scope}
\begin{scope}[color=blue]
\load{1}{n2}[-49.5][.75];
\notation{1}{n2}{$f_{1,\A} \cos(280\pi t)$}[below right=6.5mm];
\point{l}{1.25}{0};
\point{m}{1.25}{-0.275};
\addon{3}{n2}{l}{m}[-1];
\notation{1}{l}{$\alpha$}[below=-0.5mm];
\load{1}{n6}[-49.5][.75];
\notation{1}{n6}{$f_{2,\A} \cos(280\pi t)$}[below right=6.5mm];
\point{l2}{2.25}{1};
\point{m2}{2.25}{0.675};
\addon{3}{n6}{l2}{m2}[-1];
\notation{1}{l2}{$\beta$}[below=-0.5mm];
\end{scope}
\dimensioning{1}{n1}{n2}{-1.45}[$1$~m];
\dimensioning{1}{n2}{n3}{-1.45}[$1$~m];
\dimensioning{2}{n1}{n4}{-0.75}[$1$~m];
\end{tikzpicture}
\caption{$10$-segment frame problem: design domain and boundary conditions. For the fundamental free-vibration eigenvalue constraint, we additionally include the nonstructural mass shown in red, while for the dynamic compliance and peak input power scenarios the uniform-phase forces are drawn in blue.}
\label{fig:frame20}
\end{figure}
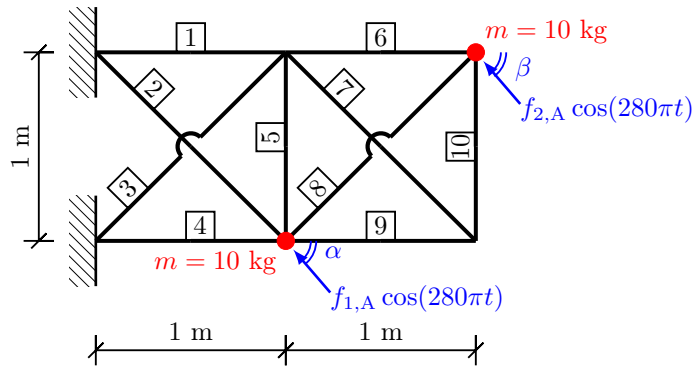

The design domain is a two-dimensional frame structure with a total length of $2$~m and a height of $1$~m that is clamped at the left side, see Fig.~\ref{fig:frame20}. The design variables are the cross-sectional areas of the segments, which are denoted by $\mathbf{a} = [a_1, \dots, a_{10}]\T$. We model each segment of the structure by two Euler--Bernoulli elements of equal length and assign them the same cross-sectional variable. For all elements, we assume a circular cross section made of aluminum (of Young modulus $E=68.9$~GPa and density $\rho=2,770$~kg/m$^3$). For optimization, we bound the fundamental free-vibration eigenfrequency by $140$~Hz from below.

In what follows, we investigate three equivalent settings of the problem: free-vibration, peak input power, and dynamic compliance constraints. The first two scenarios are subject to nonstructural masses of $10$~kg (indicated with solid red circles in Fig.~\ref{fig:frame20}), while the remaining ones are subject to harmonic loads (drawn in blue in Fig.~\ref{fig:frame20}) of angular frequency $\omega=280\pi$~rad/s and unknown worst-case amplitudes $f_{1,\A}$, $f_{2,\A}$ such that $f_{1,\A}^2 + f_{2,\A}^2 \le 100^2$ N$^2$.

\subsubsection{Free-vibration eigenvalue constraint}

As the first optimization variant, we investigate the free-vibration setting of \citet{Ni2014}, who placed two nonstructural masses of $10$~kg on the structure, see the red circles in Fig.~\ref{fig:frame20}. For this setting, we obtained an initial feasible design of the weight $264.392$~kg. Using this objective upper bound, we proceeded with solving the moment relaxations of the polynomial optimization problem (recall Section \ref{sec:moment}).

\begin{figure}[!t]
\centering
\begin{subfigure}{0.3\linewidth}
\centering
\includegraphics[width=\linewidth]{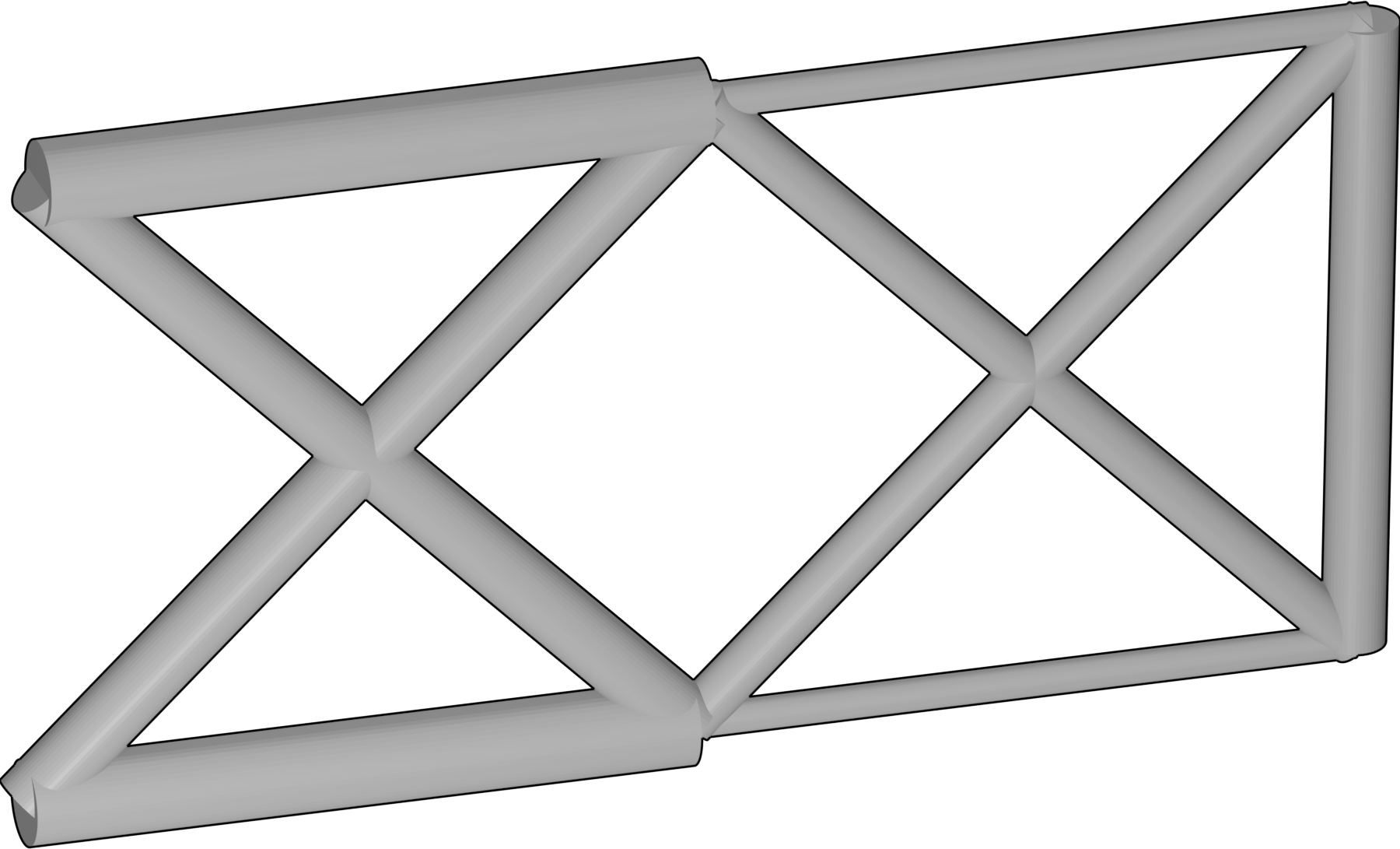}
\caption{}
\label{fig:cantilever20_ni}
\end{subfigure}\hfill%
\begin{subfigure}{0.3\linewidth}
\centering
\includegraphics[width=\linewidth]{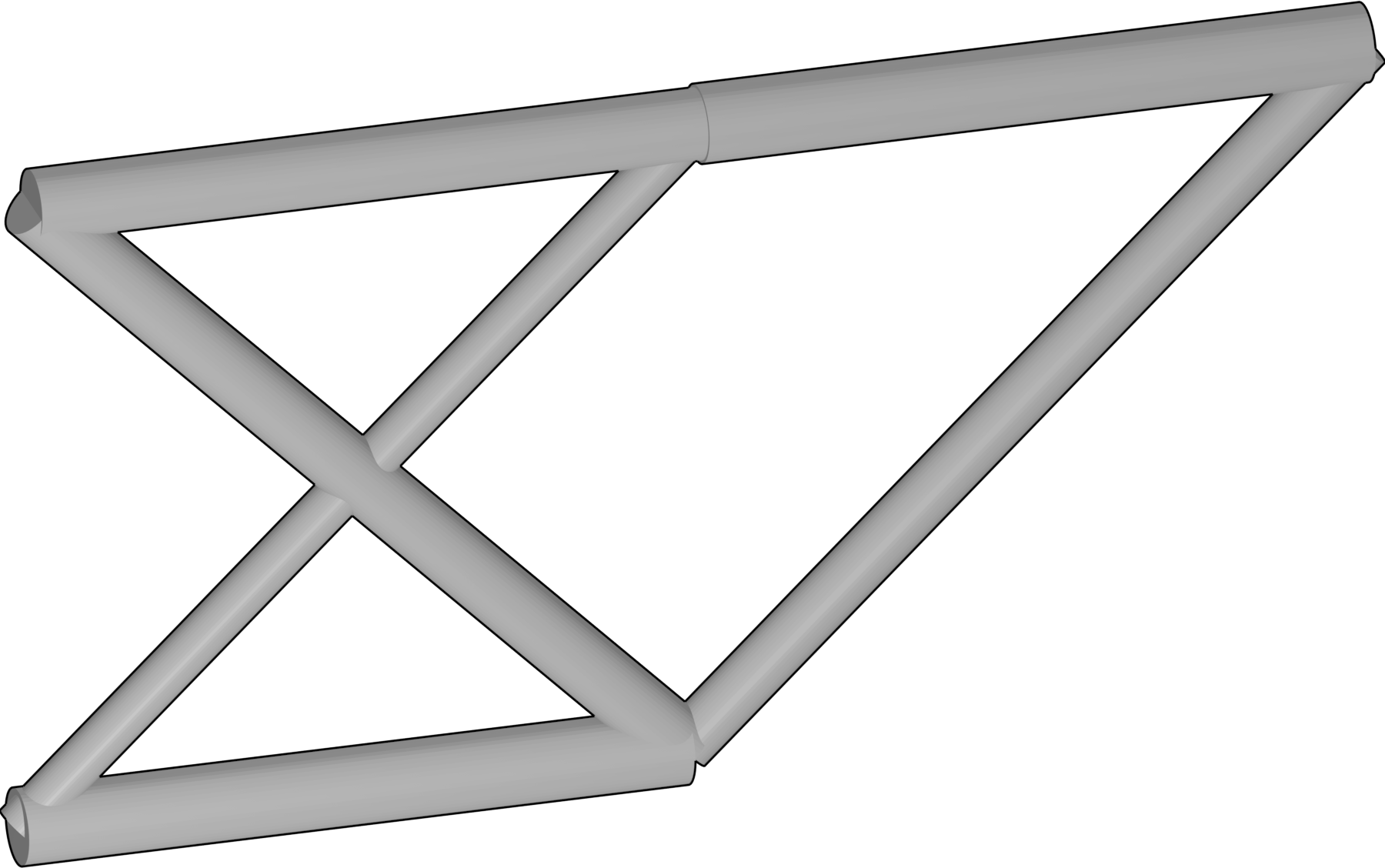}
\caption{}
\label{fig:cantilever20_global}
\end{subfigure}\hfill%
\begin{subfigure}{0.34\linewidth}
\centering
\begin{tikzpicture}
\begin{axis}[legend pos=north west, width=7cm, height=4.5cm, hide axis, unbounded coords=jump, unit vector ratio={1 1}]
\addplot[mark=none] table {ni10vib.dat};
\node (S) at (axis cs:0,0) {};
\node (S2) at (axis cs:0,1) {};
\end{axis}
\support{3}{S}[270];
\support{3}{S2}[270];
\end{tikzpicture}
\caption{}
\label{fig:cantilever20_eigenmode}
\end{subfigure}
\begin{subfigure}{0.45\linewidth}
\begin{tikzpicture}
\begin{axis}[legend style={at={(0.5,0.03)},anchor=south}, height=4.5cm, width=7.25cm, xmin=0, xmax=0.02,ymin=-1.5e-6,ymax=1.5e-6,samples=100,domain=0:0.02,xlabel={$t$ [s]},ylabel={$\mathbf{f}_\R(t)\TC \mathbf{u}(t)$ [Nm]},minor tick num=4]
\addplot+[mark=none, dashed, black] {1.29236203625431e-06*(cos(deg(2*pi*140*x)))^2};    
\addplot[only marks,red] table[row sep=crcr] {
0 1.29236203625431e-06\\
0.00357142857142857 1.29236203625431e-06\\
0.00714285714285714 1.29236203625431e-06\\
0.0107142857142857 1.29236203625431e-06\\
0.0142857142857143 1.29236203625431e-06\\
0.0178571428571429 1.29236203625431e-06\\
};
\end{axis}
\end{tikzpicture}
\caption{}
\label{fig:cantilever20_compliance}
\end{subfigure}%
\hfill\begin{subfigure}{0.45\linewidth}
\begin{tikzpicture}
\begin{axis}[legend style={at={(0.5,0.03)},anchor=south}, height=4.5cm, width=7.25cm, xmin=0, xmax=0.02, ymin=-0.75e-3, ymax=0.75e-3, samples=100,domain=0:0.02,xlabel={$t$ [s]},ylabel={$\mathbf{f}_\R(t)\TC \mathbf{v}(t)$ [W]},minor tick num=4]
\addplot+[mark=none,black] {0.000568410511042483*(-sin(deg(2*2*pi*140*x)))};    
\addplot[only marks,red] table[row sep=crcr] {
0.000892857142857143 -0.000568410511042483\\
0.00267857142857143 0.000568410511042483\\
0.00446428571428571 -0.000568410511042483\\
0.00625 0.000568410511042483\\
0.00803571428571428 -0.000568410511042483\\
0.00982142857142857 0.000568410511042483\\
0.0116071428571429 -0.000568410511042483\\
0.0133928571428571 0.000568410511042483\\
0.0151785714285714 -0.000568410511042483\\
0.0169642857142857 0.000568410511042483\\
0.01875 -0.000568410511042483\\
};
\end{axis}
\end{tikzpicture}
\caption{}
\label{fig:cantilever20_power}
\end{subfigure}
\caption{$10$-segment frame problem: (a) Best design reported in \citep{Ni2014} ($195.59$~kg), (b) global minimum weight design ($w^*=148.44$~kg) and the (c) corresponding eigenmode associated with fundamental free-vibration eigenvalue which concurrently constitutes the scaled amplitudes of worst-case displacements in the robust dynamic compliance constraint and of worst-case velocities in the robust peak input power constraint. For the optimal design, we further show (d) instantaneous force-displacement product, with peaks at the maxima corresponding to dynamic compliance values, and (e) instantaneous force-velocity product, where both maxima and minima attain the same absolute value, corresponding to peak input power. Note that the peak values in (d) and (e) are related by $d_\R = 2p_\R/\omega$, but the time instants of their occurrence differ.}
\label{fig:cantilever20}
\end{figure}

In the first relaxation of the hierarchy, we obtain a lower bound of $\underline{w}^{(1)} = 33.978$~kg and an upper bound of $\hat{w}^{(1)} = 148.442$~kg, which already provides the globally-optimal structure in Fig.~\ref{fig:cantilever20_global} and represents a significantly lighter design compared to the $195.592$~kg structure reported in \citep{Ni2014}, see Fig.~\ref{fig:cantilever20_ni}. Yet, the global optimality is currently not certified and additional degrees of the hierarchy need to be taken. In particular, the second relaxation improves the lower bound to $\underline{w}^{(2)} = 81.849$~kg, while the third one to $\underline{w}^{(3)} = 148.106$~kg, making the optimality gap as small as $0.336$~kg ($0.23\%$ of the design weight). Final, potential reductions in the optimality gap can be achieved in the fourth ($\underline{w}^{(4)}=148.420$ kg) and the fifth ($\underline{w}^{(5)} = 148.438$ kg) relaxations; see Table~\ref{tab:frame20_fv}.

The globally-optimal frame in Fig.~\ref{fig:cantilever20_global} is formed by $6$ segments: $a_1 = 88.285$ cm$^2$, $a_2 = 76.443$ cm$^2$, $a_3 = 37.742$ cm$^2$, $a_4 = 102.133$ cm$^2$, $a_6 = 105.568$ cm$^2$, and $a_8 = 55.453$ cm$^2$, while the remaining segments are removed from the structure. The fundamental eigenmode in Fig.~\ref{fig:cantilever20_eigenmode} predominantly exhibits bending of the beams at the right, and occurs at the natural frequency of $140$~Hz.

\subsubsection{Robust dynamic compliance and peak input power constraints}

As an alternative but equivalent problem, we consider weight minimizations under robust dynamic compliance and robust peak input power constraints. In both of these cases, we introduce uniform-phase loads of the angular frequency $\omega = 280 \pi$ rad/s and search for their worst-case combination such that $f_{1,\A}^2 + f_{2,\A}^2 \le 100^2$ N$^2$. In particular, we restrict the dynamic compliances of all such loads from above by $\overline{d}_\R = 1/(78400 \pi^2)$ Nm, whereas $\overline{p}_\R = 1/(560 \pi)$ W is the upper limit for all possible peak input powers.

Using the constraints \eqref{eq:robustdynamiccompliance} and \eqref{eq:robust_peakpower}, we again obtain an initial feasible design of the weight $\overline{w} = 264.392$~kg. The moment relaxations of both problems yield almost the same lower bounds as in the free-vibration case, see Tables~\ref{tab:frame20_dc} and \ref{tab:frame20_pp}, with the difference caused by numerical inaccuracies. The upper bounds are also very similar, with the best design in Fig.~\ref{fig:cantilever20_global} feasible for both scenarios, thus confirming the equivalence of all three problem formulations. Interestingly, the smallest optimality gap $\varepsilon_\R = 10^{-6}$ for the optimal design follows from the peak input power case.

The worst-case loads for the dynamic compliance and peak input power problems are identical: $f_{1,\A} = 41.617$~N and $f_{2,\A} = 90.929$~N, which correspond to the load angles $\alpha = 74.98^\circ$ and $\beta = 100.21^\circ$, see Fig.~\ref{fig:frame20}. The recovered worst-case amplitudes of displacement/velocity fields are aligned with the first eigenmode, confirming the theoretical relations of Section~\ref{sec:relation}. Finally, the worst-case instantaneous force-displacement and force-velocity products are shown in Figs.~\ref{fig:cantilever20_compliance} and \ref{fig:cantilever20_power}, respectively, and confirm that the robust constraints are active at the optimum.

\begin{table}[!t]
\centering
\begin{subtable}{\linewidth}
\centering
\begin{tabular}{cccccccc}
$r$ & $\underline{w}$ & $\overline{w}_{\min}$ & $\varepsilon_\R$ & $n_\mathrm{c} \times m$ & $n$ & $t$ [s]\\
\hline 1 & $33.978$ & $148.442$ & $3.37$ & $11\times 1$, $1\times11$, $1\times 42$ & $65$ & $0.09$ \\
2 & $81.849$ & $148.442$ & $0.81$ & $11\times 11$, $1\times 21$, $1\times 462$ & $790$ & $9.00$\\
3 & $148.106$ & $148.442$ & $2.3\times 10^{-3}$ & $11\times21$, $1\times31$, $1\times882$ & $1605$ & $83.23$\\
4 & $148.420$ & $148.442$ & $1.5\times 10^{-4}$ & $11\times31$, $1\times41$, $1\times1302$ & $4370$ & $308.91$\\
5 & $148.438$ & $148.442$ & $2.9\times 10^{-5}$ & $11\times41$, $1\times51$, $1\times1722$ & $8665$ & $501.92$
\end{tabular}
\caption{}
\label{tab:frame20_fv}
\end{subtable}\\
\begin{subtable}{\linewidth}
\centering
\begin{tabular}{cccccccc}
$r$ & $\underline{w}$ & $\overline{w}_{\min}$ & $\varepsilon_\R$ & $n_\mathrm{c} \times m$ & $n$ & $t$ [s]\\
\hline 1 & $33.978$ & $148.442$ & $3.37$ & $11\times 1$, $1\times11$, $1\times 46$ & $65$ & $0.08$ \\
2 & $81.847$ & $148.442$ & $0.81$ & $11\times 11$, $1\times 21$, $1\times 506$ & $790$ & $10.99$\\
3 & $148.162$ & $148.442$ & $1.9\times 10^{-3}$ & $11\times21$, $1\times31$, $1\times966$ & $1605$ & $506.65$\\
4 & $148.423$ & $148.442$ & $1.3\times 10^{-4}$ & $11\times31$, $1\times41$, $1\times1426$ & $4370$ & $291.04$\\
5 & $148.436$ & $148.442$ & $3.7\times 10^{-5}$ & $11\times41$, $1\times51$, $1\times1886$ & $8665$ & $563.85$
\end{tabular}
\caption{}
\label{tab:frame20_dc}
\end{subtable}\\
\begin{subtable}{\linewidth}
\centering
\begin{tabular}{cccccccc}
$r$ & $\underline{w}$ & $\overline{w}_{\min}$ & $\varepsilon_\R$ & $n_\mathrm{c} \times m$ & $n$ & $t$ [s]\\
\hline 1 & $33.979$ & $148.442$ & $3.37$ & $11\times 1$, $1\times11$, $1\times 46$ & $65$ & $0.08$ \\
2 & $81.853$ & $148.442$ & $0.81$ & $11\times 11$, $1\times 21$, $1\times 506$ & $790$ & $12.19$\\
3 & $148.134$ & $148.442$ & $2.1\times 10^{-3}$ & $11\times21$, $1\times31$, $1\times966$ & $1605$ & $218.35$\\
4 & $148.402$ & $148.442$ & $2.7\times 10^{-4}$ & $11\times31$, $1\times41$, $1\times1426$ & $4370$ & $313.86$\\
5 & $148.442$ & $148.442$ & $1.0\times 10^{-6}$ & $11\times41$, $1\times51$, $1\times1886$ & $8665$ & $552.93$
\end{tabular}
\caption{}
\label{tab:frame20_pp}
\end{subtable}
\caption{$10$-segment frame problem: (a) free-vibration, (b) dynamic compliance, and (c) peak input power variants. $r$ denotes the relaxation degree, $\underline{w}$ the relaxation lower bound, $\overline{w}_{\min}$ stands for the best upper bound found so far, $\varepsilon_\R$ is the relative optimality gap, $n_\mathrm{c} \times m$ provides the numbers $n_\mathrm{c}$ of semidefinite constraints of sizes $\mathbb{S}^m$, $n$ is the number of variables, and $t$ is the solution run time.}
\end{table}

\subsection{35-segment frame tower}

As the second benchmark, we investigate the $35$-segment frame tower shown in Fig. \ref{fig:frame35}, originally also introduced in \citep{Ni2014}. The structure is clamped at the bottom, and each segment is discretized by two Euler--Bernoulli elements. All beams are made of aluminum with Young's modulus $E=68.9$~GPa and the density of $\rho=2,770$~kg/m$^3$.

\begin{figure}[!t]
  \centering
		\begin{subfigure}[t]{0.8\linewidth}
      \centering
        \scalebox{1}{
					\begin{tikzpicture}
						\scaling{2.25};
						\point{n1}{0.000000}{0.000000}
						\point{n2}{1.000000}{0.000000}
						\point{n3}{0.000000}{1.000000}
						\point{n4}{1.000000}{1.000000}
						\point{n5}{-2.000000}{2.000000}
						\point{n6}{-1.000000}{2.000000}
						\point{n7}{0.000000}{2.000000}
						\point{n8}{1.000000}{2.000000}
						\point{n9}{2.000000}{2.000000}
						\point{n10}{3.000000}{2.000000}
						\point{n11}{-1.000000}{3.000000}
						\point{n12}{0.000000}{3.000000}
						\point{n13}{1.000000}{3.000000}
						\point{n14}{2.000000}{3.000000}
						\point{n15}{0.000000}{0.500000}
						\point{n16}{0.500000}{1.000000}
						\point{n17}{0.500000}{0.500000}
						\point{n18}{0.500000}{0.500000}
						\point{n19}{0.500000}{1.000000}
						\point{n20}{1.000000}{0.500000}
						\point{n21}{0.500000}{1.000000}
						\point{n22}{0.000000}{1.500000}
						\point{n23}{0.500000}{2.000000}
						\point{n24}{0.500000}{1.500000}
						\point{n25}{0.500000}{1.500000}
						\point{n26}{0.500000}{2.000000}
						\point{n27}{1.000000}{1.500000}
						\point{n28}{-1.500000}{2.000000}
						\point{n29}{-0.500000}{2.000000}
						\point{n30}{0.500000}{2.000000}
						\point{n31}{1.500000}{2.000000}
						\point{n32}{2.500000}{2.000000}
						\point{n33}{-1.500000}{2.500000}
						\point{n34}{-1.000000}{2.500000}
						\point{n35}{-1.000000}{2.500000}
						\point{n36}{-0.500000}{2.500000}
						\point{n37}{-0.500000}{2.500000}
						\point{n38}{0.000000}{2.500000}
						\point{n39}{0.500000}{2.500000}
						\point{n40}{0.500000}{2.500000}
						\point{n41}{1.000000}{2.500000}
						\point{n42}{1.500000}{2.500000}
						\point{n43}{1.500000}{2.500000}
						\point{n44}{2.000000}{2.500000}
						\point{n45}{2.000000}{2.500000}
						\point{n46}{2.500000}{2.500000}
						\point{n47}{-0.500000}{3.000000}
						\point{n48}{0.500000}{3.000000}
						\point{n49}{1.500000}{3.000000}
						\beam{2}{n1}{n15}; \notation{4}{n1}{n3}[$1$];
						\beam{2}{n3}{n15}
						\beam{2}{n1}{n16}; \notation{4}{n1}{n8}[$2$][0.41];
						\beam{2}{n8}{n16}
						\beam{2}{n1}{n17}; \notation{4}{n1}{n4}[$3$][0.57][below];
						\beam{2}{n4}{n17}
						\beam{2}{n2}{n18}; \notation{4}{n2}{n3}[$4$][0.57][below];
						\beam{2}{n3}{n18}
						\beam{2}{n2}{n19}; \notation{4}{n2}{n7}[$5$][0.41];
						\beam{2}{n7}{n19}
						\beam{2}{n2}{n20}; \notation{4}{n2}{n4}[$6$][0.5][below];
						\beam{2}{n4}{n20}
						\beam{2}{n3}{n21}; \notation{4}{n3}{n4}[$7$][0.3];
						\beam{2}{n4}{n21}
						\beam{2}{n3}{n22}; \notation{4}{n3}{n7}[$8$];
						\beam{2}{n7}{n22}
						\beam{2}{n3}{n23}; \notation{4}{n3}{n13}[$9$][0.41];
						\beam{2}{n13}{n23}
						\beam{2}{n3}{n24}; \notation{4}{n3}{n8}[$10$][0.57][below];
						\beam{2}{n8}{n24}
						\beam{2}{n4}{n25}; \notation{4}{n4}{n7}[$11$][0.57][below];
						\beam{2}{n7}{n25}
						\beam{2}{n4}{n26}; \notation{4}{n4}{n12}[$12$][0.41];
						\beam{2}{n12}{n26}
						\beam{2}{n4}{n27}; \notation{4}{n4}{n8}[$13$][0.5][below];
						\beam{2}{n8}{n27}
						\beam{2}{n5}{n28}; \notation{4}{n5}{n6}[$14$];
						\beam{2}{n6}{n28}
						\beam{2}{n6}{n29}; \notation{4}{n6}{n7}[$15$];
						\beam{2}{n7}{n29}
						\beam{2}{n7}{n30}; \notation{4}{n7}{n8}[$16$][0.3];
						\beam{2}{n8}{n30}
						\beam{2}{n8}{n31}; \notation{4}{n8}{n9}[$17$];
						\beam{2}{n9}{n31}
						\beam{2}{n9}{n32}; \notation{4}{n9}{n10}[$18$];
						\beam{2}{n10}{n32}
						\beam{2}{n5}{n33}; \notation{4}{n5}{n11}[$19$];
						\beam{2}{n11}{n33}
						\beam{2}{n5}{n34}; \notation{4}{n5}{n12}[$20$][0.41];
						\beam{2}{n12}{n34}
						\beam{2}{n6}{n35}; \notation{4}{n6}{n11}[$21$][0.3];
						\beam{2}{n11}{n35}
						\beam{2}{n6}{n36}; \notation{4}{n6}{n12}[$22$][0.41];
						\beam{2}{n12}{n36}
						\beam{2}{n7}{n37}; \notation{4}{n7}{n11}[$23$][0.41];
						\beam{2}{n11}{n37}
						\beam{2}{n7}{n38}; \notation{4}{n7}{n12}[$24$];
						\beam{2}{n12}{n38}
						\beam{2}{n7}{n39}; \notation{4}{n7}{n13}[$25$][0.57][below];
						\beam{2}{n13}{n39}
						\beam{2}{n8}{n40}; \notation{4}{n8}{n12}[$26$][0.57][below];
						\beam{2}{n12}{n40}
						\beam{2}{n8}{n41}; \notation{4}{n8}{n13}[$27$][0.5][below];
						\beam{2}{n13}{n41}
						\beam{2}{n8}{n42}; \notation{4}{n8}{n14}[$28$][0.41];
						\beam{2}{n14}{n42}
						\beam{2}{n9}{n43}; \notation{4}{n9}{n13}[$29$][0.41];
						\beam{2}{n13}{n43}
						\beam{2}{n9}{n44}; \notation{4}{n9}{n14}[$30$][0.3][below];
						\beam{2}{n14}{n44}
						\beam{2}{n10}{n45}; \notation{4}{n10}{n13}[$31$][0.41];
						\beam{2}{n13}{n45}
						\beam{2}{n10}{n46}; \notation{4}{n10}{n14}[$32$];
						\beam{2}{n14}{n46}
						\beam{2}{n11}{n47}; \notation{4}{n11}{n12}[$33$][0.5][below];
						\beam{2}{n12}{n47}
						\beam{2}{n12}{n48}; \notation{4}{n12}{n13}[$34$][0.5][below];
						\beam{2}{n13}{n48}
						\beam{2}{n13}{n49}; \notation{4}{n13}{n14}[$35$][0.5][below];
						\beam{2}{n14}{n49}
				
						\support{3}{n1}[0];
						\support{3}{n2}[0];
						\begin{scope}[color=red]
							\draw[red,fill=red] (n12) circle (.75ex);
							\notation{1}{n12}{$m=10$~kg}[above left=0.3mm and -2mm];
							\draw[red,fill=red] (n13) circle (.75ex);
							\notation{1}{n13}{$m=10$~kg}[above left=0.3mm and -2mm];
						\end{scope}
						\begin{scope}[color=blue]
							\load{1}{n12}[49.5][.75];
							\notation{1}{n12}{$f_{1,\A} \cos(160\pi t)$}[above right=6mm and -13mm];
							\point{l}{0.25}{3};
							\point{m}{0.25}{3.275};
							\addon{3}{n12}{m}{l}[-1];
							\notation{1}{l}{$\alpha$}[above=-0.5mm];
							\load{1}{n13}[49.5][.75];
							\notation{1}{n13}{$f_{2,\A} \cos(160\pi t)$}[above right=6mm and 0mm];
							\point{l2}{1.25}{3};
							\point{m2}{1.25}{3.275};
							\addon{3}{n13}{m2}{l2}[-1];
							\notation{1}{l2}{$\beta$}[above=-0.5mm];
						\end{scope}
						\dimensioning{1}{n5}{n6}{-1.0}[$1$~m];
						\dimensioning{1}{n6}{n7}{-1.0}[$1$~m];
						\dimensioning{1}{n7}{n8}{-1.0}[$1$~m];
						\dimensioning{1}{n8}{n9}{-1.0}[$1$~m];
						\dimensioning{1}{n9}{n10}{-1.0}[$1$~m];
						\dimensioning{2}{n1}{n4}{-5.0}[$1$~m];
						\dimensioning{2}{n4}{n8}{-5.0}[$1$~m];
						\dimensioning{2}{n8}{n13}{-5.0}[$1$~m];
					\end{tikzpicture}}
		\caption{}
		\end{subfigure}%
\caption{$35$-segment frame problem: design domain and boundary conditions. For the fundamental free-vibration eigenvalue constraint, we additionally include the nonstructural mass shown in red, while for the dynamic compliance and peak input power scenarios the uniform-phase forces are drawn in blue.}
\label{fig:frame35}
\end{figure}

Due to the equivalence of constraints and similar numerical performance observed in the previous example, recall Tables \ref{tab:frame20_fv}--\ref{tab:frame20_pp}, here we explicitly solve only the free-vibration-eigenvalue-based formulation. However, we interpret the designs also with respect to the other two formulations. 

For free-vibrations, the structure carries two nonstructural masses of $10$~kg each placed at the central top nodes, see the solid red circles in Fig. \ref{fig:frame35}. The optimization objective is to minimize the total structural weight while enforcing a lower bound of $f = 80$~Hz for the nonzero natural frequencies. Analogously, for the robust dynamic compliance and robust peak input power versions, we consider harmonic loads of angular frequency $\omega = 160 \pi$~rad/s and unknown worst-case amplitudes $f_{1,\A}$, $f_{2,\A}$ such that $f_{1,\A}^2 + f_{2,\A}^2 \le 100^2$ N$^2$. In the dynamic compliance case, the instantaneous product of all such forces and corresponding displacements is bounded from above by $\overline{d}_\R = 1/(25,600 \pi^2)$ Nm, while in the peak input power case, the instantaneous product of all these forces and associated velocities is limited by $\overline{p}_\R = 1/(320 \pi)$ W. 

\begin{table}[!b]
  \centering
    \begin{tabular}{lrrrrrrr}
		$r$ & $\underline{w}$ & $\overline{w}_{\min}$ & $\varepsilon_\R$ & $n_\mathrm{c}\times m$ & $n$ & $t$ [s]\\
		\hline
		$1$ & {$28.02$} & {$85.82$} & {$2.06$} & $36\times1$, $1\times 36$, $1\times141$ & $665$ & {$0.25$} \\
		$2$ & {$38.02$} & {$70.92$} & {$0.87$} & $36\times36$, $1\times 71$, $1\times5076$ & $29\thinspace890$ & {$16\thinspace636$} \\
		\end{tabular}
    \caption{$35$-segment frame problem and its free-vibration settings: $r$ denotes the relaxation degree, $\underline{w}$ the relaxation lower bound, $\overline{w}_{\min}$ stands for the best upper bound found so far, $\varepsilon_\R$ is the relative optimality gap, $n_\mathrm{c} \times m$ provides the numbers $n_\mathrm{c}$ of semidefinite constraints of sizes $\mathbb{S}^m$, $n$ is the number of variables, and $t$ is the solution run time.}
    \label{tab:frame35}
\end{table}

The optimization problem is already too large to be solved to certified global optimality. However, we can still provide high-quality designs with guaranteed optimality gaps. For the free-vibration version, we obtain an initial feasible design of the weight $\overline{w} = 1,323.25$~kg. In the first relaxation of the hierarchy, we receive a lower bound of $\underline{w}^{(1)} = 28.02$~kg and an upper bound of $\hat{w}^{(1)} = 85.82$~kg. This upper bound represents a significantly lighter design compared to $2,205$~kg that was reported in \citep{Ni2014}, see also the design visualization in Fig.~\ref{fig:frame35_ni}. The second relaxation improves the lower bound to $\underline{w}^{(2)} = 38.05$~kg, and the upper bound to $\hat{w}^{(2)} = 70.92$~kg, with the related design visualized in Fig.~\ref{fig:frame35_best}. 
For this design, we have $a_1=37.236$~cm$^2$, $a_7 = 7.656$~cm$^2$, $a_8 = 33.033$~cm$^2$, $a_{10}=13.908$~cm$^2$, $a_{11} = 16.899$~cm$^2$, $a_{13} = 40.102$~cm$^2$, $a_{16} = 2.987$~cm$^2$, $a_{24} = 7.192$~cm$^2$, $a_{25} = 15.538$~cm$^2$, $a_{26} = 17.556$, $a_{27} = 6.745$~cm$^2$, and $a_{34} = 30.724$~cm$^2$, while all other cross-section areas are zero.
Although the optimality gap remains large, the designs obtained are high quality local solutions that are significantly lighter than those reported in \citep{Ni2014}, showing that the method can handle nonconvexity in larger problems more effectively than previously possible.

The optimized design (Fig.~\ref{fig:frame35_best}) exhibits a clear load path from the top to the base. Its fundamental eigenfrequency of $80$ Hz is repeated (Fig.~\ref{fig:frame35_eigenmode}), with the eigenmodes showing a bending-dominated deformation pattern. Finally, we interpret the optimized design obtained with respect to the robust dynamic compliance and peak input power formulations. For the first natural mode, we recover the worst-case load as $\alpha = 24.11^\circ$ and $\beta = -2.82^\circ$ together with $f_{\A,1} = 74.84$~N and $f_{\A,2} = 66.33$~N, while for the second natural mode, we obtain $\alpha = 93.95^\circ$ and $\beta = 102.85^\circ$ together with $f_{\A,1} = 57.06$~N and $f_{\A,2} = 82.12$~N; see Fig.~\ref{fig:frame35}. Finally, the worst-case instantaneous force-displacement and force-velocity products are shown in Figs.~\ref{fig:frame35_compliance} and \ref{fig:frame35_power}, respectively, and confirm that the robust constraints are again active at the optimum.

\begin{figure}[!t]
\centering
\begin{subfigure}{0.3\linewidth}
  \includegraphics[height=3cm]{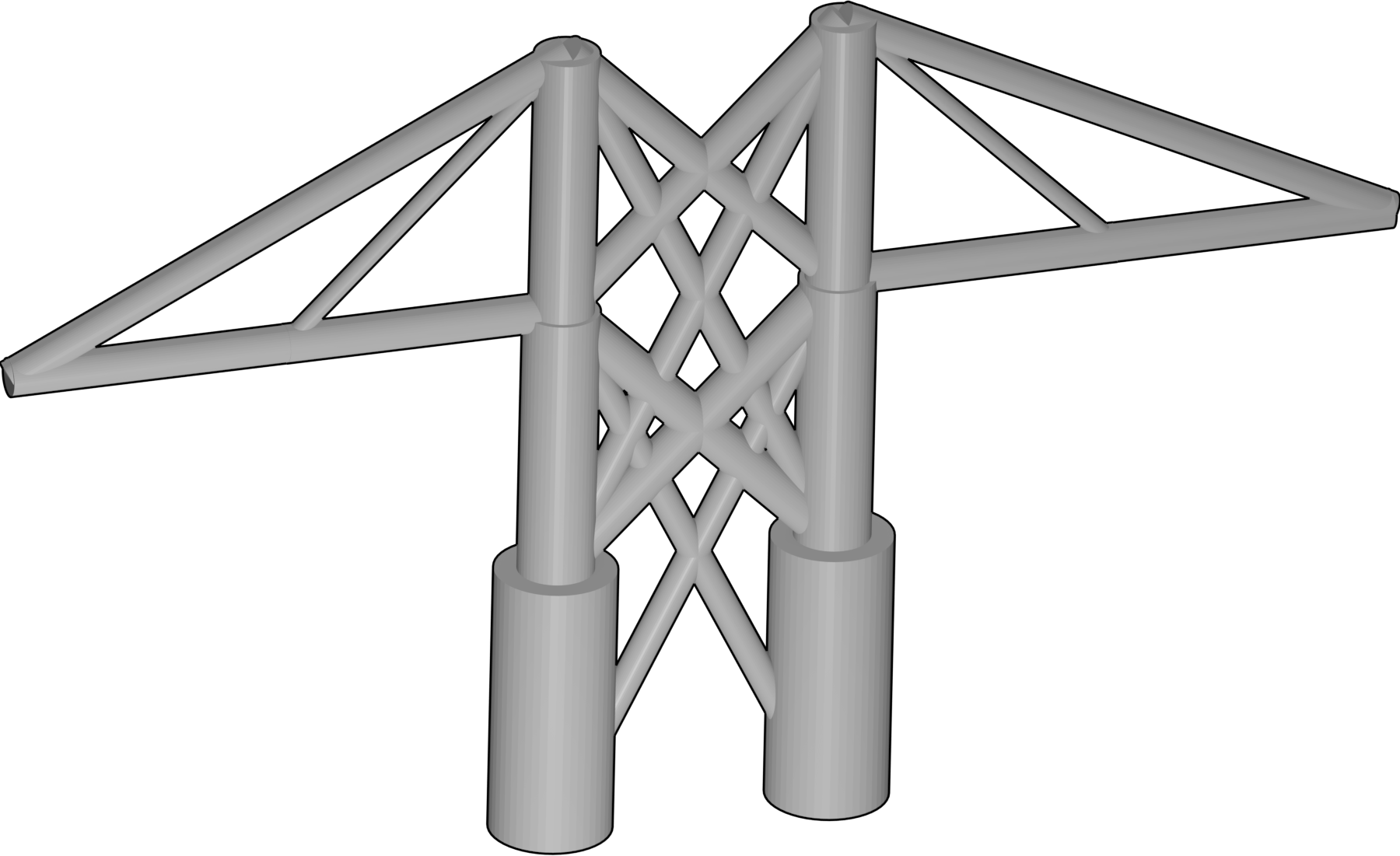}
  \caption{}
  \label{fig:frame35_ni}
\end{subfigure}%
\begin{subfigure}{0.3\linewidth}
	\centering
	\begin{tikzpicture}
			\begin{axis}[legend pos=north west, width=7cm, height=5.6cm, hide axis, unbounded coords=jump, unit vector ratio={1 1}]
					\addplot[mark=none] table {ni35orig.dat};
					\node (S) at (axis cs:0,0) {};
					\node (S2) at (axis cs:1,0) {};
				\end{axis}
			\support{3}{S}[0];
			\support{3}{S2}[0];
		\end{tikzpicture}
	\caption{}
\end{subfigure}%
\begin{subfigure}{0.1\linewidth}
  \centering
  \includegraphics[height=3cm]{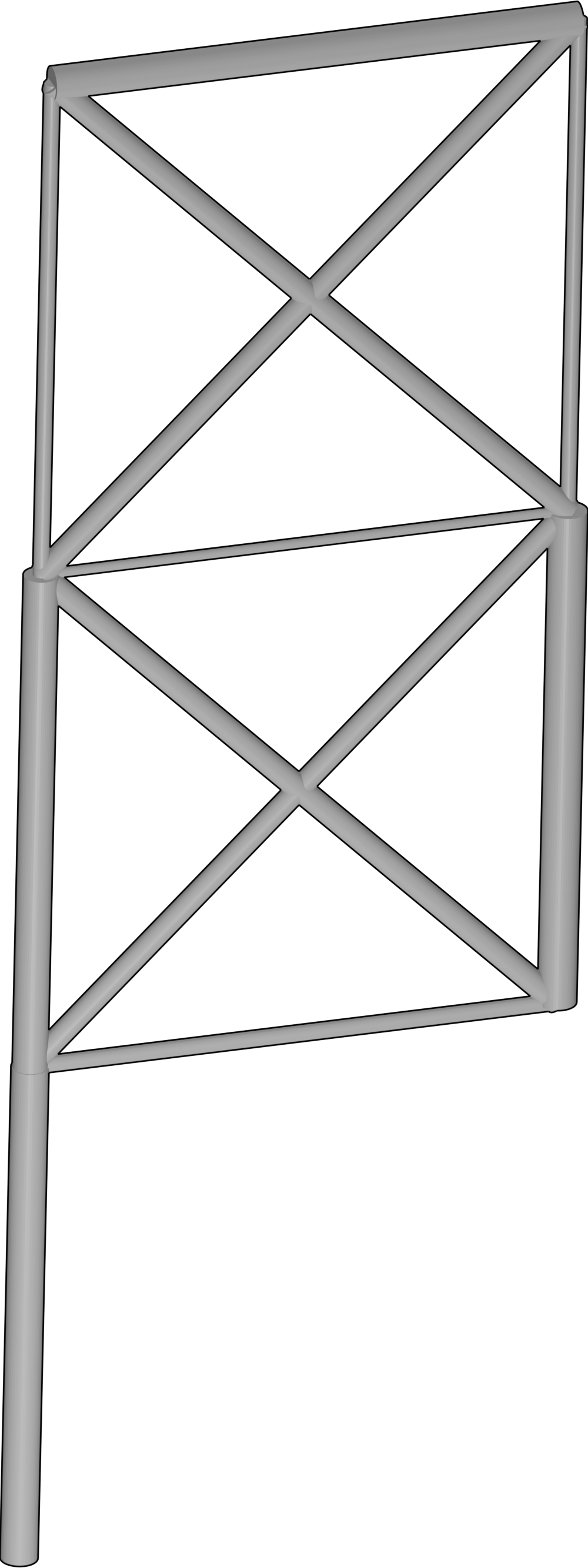}
  \caption{}
  \label{fig:frame35_best}
\end{subfigure}%
\begin{subfigure}{0.3\linewidth}
	\centering
	\begin{tikzpicture}
			\begin{axis}[legend pos=north west, width=7cm, height=4.9cm, hide axis, unbounded coords=jump, unit vector ratio={1 1}]
					\addplot[mark=none] table {ni35vib.dat};
					\node (S) at (axis cs:0,0) {};
					\node (S2) at (axis cs:1,0) {};
				\end{axis}
			\support{3}{S}[0];
			\support{3}{S2}[0];
		\end{tikzpicture}%
  	\begin{tikzpicture}
			\begin{axis}[legend pos=north west, width=7cm, height=4.85cm, hide axis, unbounded coords=jump, unit vector ratio={1 1}]
					\addplot[mark=none] table {ni35vib2.dat};
					\node (S) at (axis cs:0,0) {};
					\node (S2) at (axis cs:1,0) {};
				\end{axis}
			\support{3}{S}[0];
			\support{3}{S2}[0];
		\end{tikzpicture}
	\caption{}
  \label{fig:frame35_eigenmode}
\end{subfigure}\\
\begin{subfigure}{0.45\linewidth}
\begin{tikzpicture}
\begin{axis}[legend style={at={(0.5,0.03)},anchor=south}, height=4.5cm, width=7.25cm, xmin=0, xmax=0.02,ymin=-4.5e-6,ymax=4.5e-6,samples=100,domain=0:0.02,xlabel={$t$ [s]},ylabel={$\mathbf{f}_\R(t)\TC \mathbf{u}(t)$ [Nm]},minor tick num=4]
\addplot+[mark=none, dashed, black] {3.95785873602882e-06*(cos(deg(2*pi*80*x)))^2};    
		\addplot[only marks,red] table[row sep=crcr] {
				0 3.95785873602882e-06\\
				0.00625 3.95785873602882e-06\\
				0.0125 3.95785873602882e-06\\
				0.01875 3.95785873602882e-06\\
				};
\end{axis}
\end{tikzpicture}
\caption{}
\label{fig:frame35_compliance}
\end{subfigure}%
\hfill\begin{subfigure}{0.45\linewidth}
\begin{tikzpicture}
\begin{axis}[legend style={at={(0.5,0.03)},anchor=south}, height=4.5cm, width=7.25cm, xmin=0, xmax=0.02, ymin=-1.3e-3, ymax=1.3e-3, samples=100,domain=0:0.02,xlabel={$t$ [s]},ylabel={$\mathbf{f}_\R(t)\TC \mathbf{v}(t)$ [W]},minor tick num=4]
\addplot+[mark=none,black] {0.000994718394324346*(-sin(deg(2*2*pi*80*x)))};    
		\addplot[only marks,red] table[row sep=crcr] {
			0.0015625 -0.000994718394324346\\
			0.0046875 0.000994718394324346\\
			0.0078125 -0.000994718394324346\\
			0.0109375 0.000994718394324346\\
			0.0140625 -0.000994718394324346\\
			0.0171875 0.000994718394324346\\
			};
\end{axis}
\end{tikzpicture}
\caption{}
\label{fig:frame35_power}
\end{subfigure}
\caption{$35$-segment frame problem: (a) Best design reported in \citep{Ni2014} ($2205$~kg) and (b) the eigenmode for the lowest natural frequency $80.6$~Hz, (c) optimized design ($\hat{w}^{(2)}=70.9$~kg) and (d) the corresponding eigenmodes related to the repeated free-vibration eigenfrequency of $80$~Hz. These concurrently constitute the scaled amplitudes of worst-case displacements in the robust dynamic compliance constraint and worst-case velocities in the robust peak input power constraint. For the optimized design, we further show (e) instantaneous force-displacement product, with peaks at the maxima corresponding to dynamic compliance values, and (f) instantaneous force-velocity product, where both maxima and minima attain the same absolute value, corresponding to peak input power. Note that the peak values in (e) and (f) are related by $d_\R = 2p_\R/\omega$, but the time instants of their occurrence differ.}
\label{fig:frame35design}
\end{figure}

\subsection{Experimental problem}

To demonstrate that the proposed framework can be implemented and validated experimentally, we consider a $12$-segment planar frame structure shown in Fig.~\ref{fig:exp_frame}. The frame is clamped at the left side and each straight segment is discretized by two Euler--Bernoulli beam elements. All beams have a rectangular cross-section with a fixed width of $2$~cm, while the cross-section heights serve as the design variables. Let $h_e$, $e=1,\dots,12$, denote the height of the $e$-th segment. For a rectangular cross-section, the area and second moment of area scale as
\[
a_e = b h_e, 
\qquad 
I_e = \frac{b h_e^3}{12} = \frac{a_e^3}{12 b^2},
\]
where $b=2$~cm denotes the width. As a result, the entries of the global stiffness matrix are polynomial functions of the cross-sectional areas of total degree three, whereas the mass matrix depends linearly on them. The beams are made of Pr\r{u}\v{s}a PETG filament with Young's modulus $E = 1.8$~GPa and density $\rho = 1{,}200$~kg/m$^3$.

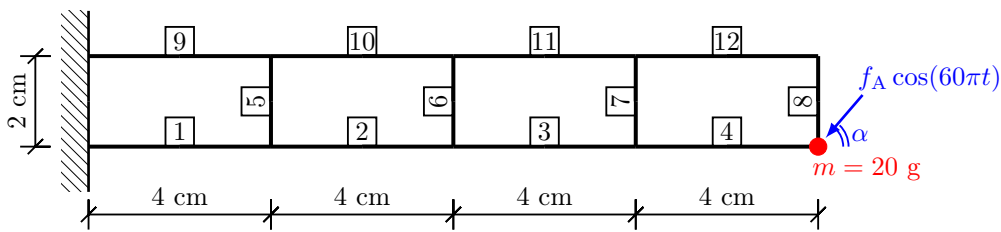
\begin{figure}[!t]
\centering
\begin{tikzpicture}
\point{n1}{0}{0}
\point{n2}{1.2}{0}
\point{n3}{2.4}{0}
\point{n4}{3.6}{0}
\point{n5}{4.8}{0}
\point{n6}{6}{0}
\point{n7}{7.2}{0}
\point{n8}{8.4}{0}
\point{n9}{9.6}{0}
\point{n10}{2.4}{0.6}
\point{n11}{4.8}{0.6}
\point{n12}{7.2}{0.6}
\point{n13}{9.6}{0.6}
\point{n14}{0}{1.2}
\point{n15}{1.2}{1.2}
\point{n16}{2.4}{1.2}
\point{n17}{3.6}{1.2}
\point{n18}{4.8}{1.2}
\point{n19}{6}{1.2}
\point{n20}{7.2}{1.2}
\point{n21}{8.4}{1.2}
\point{n22}{9.6}{1.2}
\beam{2}{n1}{n2}
\notation{4}{n1}{n3}[$1$]
\beam{2}{n2}{n3}
\beam{2}{n3}{n4}
\notation{4}{n3}{n5}[$2$]
\beam{2}{n4}{n5}
\beam{2}{n5}{n6}
\notation{4}{n5}{n7}[$3$]
\beam{2}{n6}{n7}
\beam{2}{n7}{n8}
\notation{4}{n7}{n9}[$4$]
\beam{2}{n8}{n9}
\beam{2}{n3}{n10}
\notation{4}{n3}{n16}[$5$]
\beam{2}{n10}{n16}
\beam{2}{n5}{n11}
\notation{4}{n5}{n18}[$6$]
\beam{2}{n11}{n18}
\beam{2}{n7}{n12}
\notation{4}{n7}{n20}[$7$]
\beam{2}{n12}{n20}
\beam{2}{n9}{n13}
\notation{4}{n9}{n22}[$8$]
\beam{2}{n13}{n22}
\beam{2}{n14}{n15}
\notation{4}{n14}{n16}[$9$]
\beam{2}{n15}{n16}
\beam{2}{n16}{n17}
\notation{4}{n16}{n18}[$10$]
\beam{2}{n17}{n18}
\beam{2}{n18}{n19}
\notation{4}{n18}{n20}[$11$]
\beam{2}{n19}{n20}
\beam{2}{n20}{n21}
\notation{4}{n20}{n22}[$12$]
\beam{2}{n21}{n22}
\begin{scope}[color=red]
\draw[red,fill=red] (n9) circle (.75ex);
\notation{1}{n9}{$m=20$~g}[below right=0.3mm and -2mm];
\end{scope}
\support{3}{n1}[270]
\support{3}{n14}[270]
\point{dx1a}{0}{0}
\point{dx1b}{1.2}{0}
\point{dx2a}{1.2}{0}
\point{dx2b}{2.4}{0}
\dimensioning{1}{dx1a}{dx2b}{-0.9}[$4$~cm]
\point{dx3a}{2.4}{0}
\point{dx3b}{3.6}{0}
\point{dx4a}{3.6}{0}
\point{dx4b}{4.8}{0}
\dimensioning{1}{dx3a}{dx4b}{-0.9}[$4$~cm]
\point{dx5a}{4.8}{0}
\point{dx5b}{6}{0}
\point{dx6a}{6}{0}
\point{dx6b}{7.2}{0}
\dimensioning{1}{dx5a}{dx6b}{-0.9}[$4$~cm]
\point{dx7a}{7.2}{0}
\point{dx7b}{8.4}{0}
\point{dx8a}{8.4}{0}
\point{dx8b}{9.6}{0}
\dimensioning{1}{dx7a}{dx8b}{-0.9}[$4$~cm]
\point{dy1a}{0}{0}
\point{dy1b}{0}{0.6}
\point{dy2a}{0}{0.6}
\point{dy2b}{0}{1.2}
\dimensioning{2}{dy1a}{dy2b}{-0.7}[$2$~cm]
\begin{scope}[color=blue]
	\load{1}{n9}[49.5][.75]; 
	\notation{1}{n9}{$f_{\A} \cos(60\pi t)$}[above right=6mm and 4mm];
	\point{l}{10.15}{0};
  \point{m}{10.15}{0.55};
	\addon{3}{n9}{m}{l}[-1];
	\notation{1}{l}{$\alpha$}[above=-0.5mm];
\end{scope}
\end{tikzpicture}
\caption{Design domain and boundary conditions for the experimental frame problem. The structure is clamped at the left side, and a non-structural mass of $20$~g is placed at the rightmost node (solid black circle). Each segment is discretized by two Euler--Bernoulli elements.}
\label{fig:exp_frame}
\end{figure}

A non-structural mass of $20$~g is attached to the bottom-right node, see Fig.~\ref{fig:exp_frame}. The optimization objective is to minimize the total structural weight while enforcing a lower bound $f_\text{min} = 30$~Hz on all nonzero natural frequencies of free vibration.

The orthogonal layout of the design domain in Fig.~\ref{fig:exp_frame} and the particular value of the frequency bound are chosen to be consistent with the Euler--Bernoulli beam assumptions: the optimized cross-sectional heights remain small compared to the beam lengths, and the element effective lengths are determined by the nodal coordinates rather than by the detailed joint geometry. In addition, we impose symmetry of the design with respect to the horizontal axis. This reduces the number of independent design variables from $12$ to $8$ by enforcing $a_1 = a_9$, $a_2 = a_{10}$, $a_3 = a_{11}$, and $a_4 = a_{12}$.

\subsubsection{Optimization and verification}

We solve the resulting free-vibration eigenvalue-constrained optimization problem using the moment-sum-of-squares hierarchy. The numerical results are summarized in Table~\ref{tab:frame12}. The second relaxation provides a lower bound of $\underline{w}^{(2)} = 1.18$~g and an upper bound of $\hat{w}^{(2)} = 17.04$~g. The third relaxation improves the lower bound to $\underline{w}^{(3)} = 9.07$~g, while the upper bound remains unchanged. The fourth and fifth relaxations yield further small improvements of the lower bound, up to $\underline{w}^{(5)} = 16.41$~g, reducing the relative optimality gap to $\varepsilon_\mathrm{R} = 0.04$. We note that, although higher-order relaxations can in principle be formulated and solved, the Mosek optimizer reported there numerical issues, which prevented us from obtaining reliable results beyond the degree-$5$ relaxation.

\begin{table}[!b]
  \centering
    \begin{tabular}{lrrrrrrr}
		$r$ & $\underline{w}$ & $\overline{w}_{\min}$ & $\varepsilon_\R$ & $n_\mathrm{c}\times m$ & $n$ & $t$ [s]\\
		\hline
		$2$ & {$1.18$} & {$17.04$} & {$13.44$} & $9\times9$, $1\times 17$, $1\times60$ & $424$ & {$0.49$} \\
		$3$ & {$9.07$} & {$17.04$} & {$0.88$} & $9\times17$, $1\times 25$, $1\times540$ & $1\thinspace028$ & {$39.14$} \\
		$4$ & {$14.59$} & {$17.04$} & {$0.17$} & $9\times25$, $1\times 33$, $1\times1020$ & $2\thinspace248$ & {$243.06$} \\
		$5$ & {$16.41$} & {$17.04$} & {$0.04$} & $9\times33$, $1\times 41$, $1\times1500$ & $4\thinspace420$ & {$940.44$} \\
    \end{tabular}
    \caption{$12$-segment frame problem and its free-vibration settings: $r$ denotes the relaxation degree, $\underline{w}$ the relaxation lower bound, $\overline{w}_{\min}$ stands for the best upper bound found so far, $\varepsilon_\R$ is the relative optimality gap, $n_\mathrm{c} \times m$ provides the numbers $n_\mathrm{c}$ of semidefinite constraints of sizes $\mathbb{S}^m$, $n$ is the number of variables, and $t$ is the solution run time.}
    \label{tab:frame12}
\end{table}

The best design obtained in the relaxation procedure is visualized in Fig.~\ref{fig:exp_frame_design}a. The corresponding cross-sectional heights are $h_1 = 1.820$~mm, $h_2 = 1.751$~mm, $h_3 = 1.654$~mm, $h_4 = 1.517$~mm, $h_5 = 2.432$~mm, $h_6 = 2.392$~mm, $h_7 = 2.255$~mm, and $h_8 = 1.449$~mm. For the optimized design, we further investigate the effect of mesh refinement on the free-vibration eigenfrequencies. In particular, for the optimization discretization of two elements per segment, we obtain the first three nonzero natural frequencies as $f_1 = 30.0$~Hz, $f_2 = 126.7$~Hz, and $f_3 = 256.1$~Hz and the eigenmodes visualized in Fig.~\ref{fig:exp_frame_design}b. Refining the mesh to $12$ elements per segment leads to $f_1 = 30.0$~Hz, $f_2 = 126.7$~Hz, and $f_3 = 256.0$~Hz. This indicates that the optimized design is not significantly affected by mesh refinement and that the frequency constraint is accurately enforced.

Similarly to the free-vibration case, we may investigate the robust dynamic compliance and robust peak input power formulations for this problem. In this case, we consider harmonic loads of angular frequency $\omega = 60 \pi$~rad/s and unknown worst-case amplitude $f_{\A}$ such that $\lvert f_{\A}\rvert^2 \le 0.02$~N$^2$. In the dynamic compliance case, the instantaneous product of all such forces and corresponding displacements is bounded from above by $\overline{d}_\R = 1/(3,600 \pi^2)$ Nm, while in the peak input power case, the instantaneous product of all these forces and associated velocities is limited by $\overline{p}_\R = 1/(120 \pi)$ W. For the optimized design, we recover the worst-case load direction as $\alpha = 89.51^\circ$ together with $f_{\A} = 0.141$~N.

\begin{figure}[!t]
\centering
\begin{subfigure}{0.35\linewidth}
  \includegraphics[width=\linewidth]{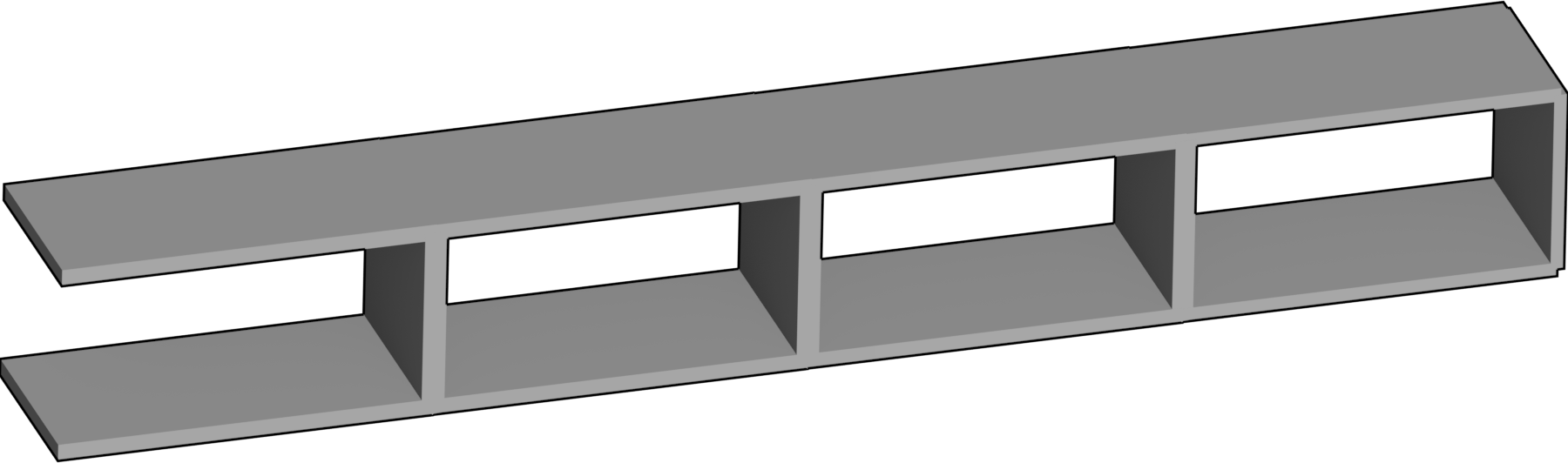}
  \caption{}
\end{subfigure}%
\hfill\begin{subfigure}{0.3\linewidth}
    \centering
    \scalebox{0.65}{
    \begin{tikzpicture}
    \begin{axis}[legend pos=north west, width=10cm, height=4.5cm, hide axis, unbounded coords=jump, unit vector ratio={1 1}]
    \addplot[mark=none] table {expframe.dat};
    \node (S) at (axis cs:0,0) {};
    \node (S2) at (axis cs:0,0.02070) {};
    \end{axis}
    \support{3}{S}[270];
    \support{3}{S2}[270];
    \end{tikzpicture}}
    \scalebox{0.65}{
    \begin{tikzpicture}
    \begin{axis}[legend pos=north west, width=10cm, height=4.5cm, hide axis, unbounded coords=jump, unit vector ratio={1 1}]
    \addplot[mark=none] table {expframe2.dat};
    \node (S) at (axis cs:0,0) {};
    \node (S2) at (axis cs:0,0.02070) {};
    \end{axis}
    \support{3}{S}[270];
    \support{3}{S2}[270];
    \end{tikzpicture}}
    \scalebox{0.65}{
    \begin{tikzpicture}
    \begin{axis}[legend pos=north west, width=10cm, height=4.5cm, hide axis, unbounded coords=jump, unit vector ratio={1 1}]
    \addplot[mark=none] table {expframe3.dat};
    \node (S) at (axis cs:0,0) {};
    \node (S2) at (axis cs:0,0.02070) {};
    \end{axis}
    \support{3}{S}[270];
    \support{3}{S2}[270];
    \end{tikzpicture}}
    \caption{}
  \end{subfigure}%
  \hfill\begin{subfigure}{0.3\linewidth}
    \scalebox{0.65}{
    \begin{tikzpicture}
    \begin{axis}[legend pos=north west, width=10cm, height=4.5cm, hide axis, unbounded coords=jump, unit vector ratio={1 1}]
    \addplot[mark=none] table {expframe1nm.dat};
    \node (S) at (axis cs:0,0) {};
    \node (S2) at (axis cs:0,0.02070) {};
    \end{axis}
    \support{3}{S}[270];
    \support{3}{S2}[270];
    \end{tikzpicture}}
    \scalebox{0.65}{
    \begin{tikzpicture}
    \begin{axis}[legend pos=north west, width=10cm, height=4.5cm, hide axis, unbounded coords=jump, unit vector ratio={1 1}]
    \addplot[mark=none] table {expframe2nm.dat};
    \node (S) at (axis cs:0,0) {};
    \node (S2) at (axis cs:0,0.02070) {};
    \end{axis}
    \support{3}{S}[270];
    \support{3}{S2}[270];
    \end{tikzpicture}}
    \caption{}
  \end{subfigure}
  \caption{Optimized design for the experimental frame problem: (a) optimized structure, (b) eigenmodes for the first three natural frequencies $30.0$~Hz, $126.7$~Hz, and $256.1$~Hz with the non-structural mass, and (c) eigenmodes for the first two natural frequencies $63.7$~Hz and $179.7$~Hz without the non-structural mass.}
  \label{fig:exp_frame_design}
\end{figure}

\subsubsection{Manufacturing and validation}

The optimized design was exported to a STEP file format and manufactured using the Bambu Lab H2D 3D printer with Pr\r{u}\v{s}a PETG filament. The slicer and printing parameters were set to ensure structural integrity and dimensional accuracy. The printed frame structure was subjected to two modal tests using three LSM6DS3TR-C accelerometers strategically placed at the centers of the bottom segments $2$--$4$ to capture the dynamic response (see Fig.~\ref{fig:exp_frame_test} for the experiment setup). In particular, we performed modal tests of the beam without the non-structural mass, and then with the non-structural mass of $20$~g placed at the bottom-right corner, see Fig.~\ref{fig:exp_frame_test}.

We excited the structure and recorded the acceleration responses sampled at 2.2 kHz for $4$~s. From the recorded data, we extracted the natural frequencies using Fourier transform techniques. For the case with the non-structural mass, the measured natural frequencies for the first three modes were consistently $30.33$~Hz, $133.23$~Hz, and $266.30$~Hz, which is by $1.1\%$, $5.2\%$, and $4.0\%$ higher than the predicted values. The beam exhibited very light damping, with damping ratios below $0.5\%$ estimated using logarithmic decrement. Thus, these values closely match the predicted frequencies from the finite element model, confirming the accuracy of the optimization and manufacturing process.

Without the non-structural mass, the finite element model predicts the first two eigenfrequencies as $63.74$~Hz and $179.69$~Hz with the eigenmodes depicted in Fig.~\ref{fig:exp_frame_design}c. In the corresponding modal test, the measured natural frequencies for the first two modes were consistently $65.32$~Hz and $185.81$~Hz, which is by $2.5\%$ and $3.4\%$ higher than the predicted values, again showing excellent agreement with the model.

\begin{figure}[!htbp]
  \centering
  \begin{subfigure}{0.2\linewidth}
    \includegraphics[width=0.95\linewidth]{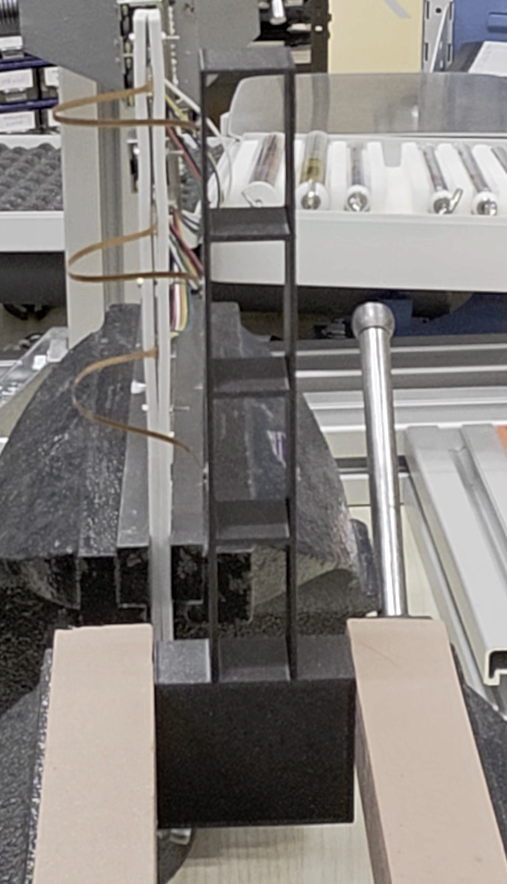}
    \caption{}
  \end{subfigure}%
  \begin{subfigure}{0.2\linewidth}
    \includegraphics[width=0.95\linewidth]{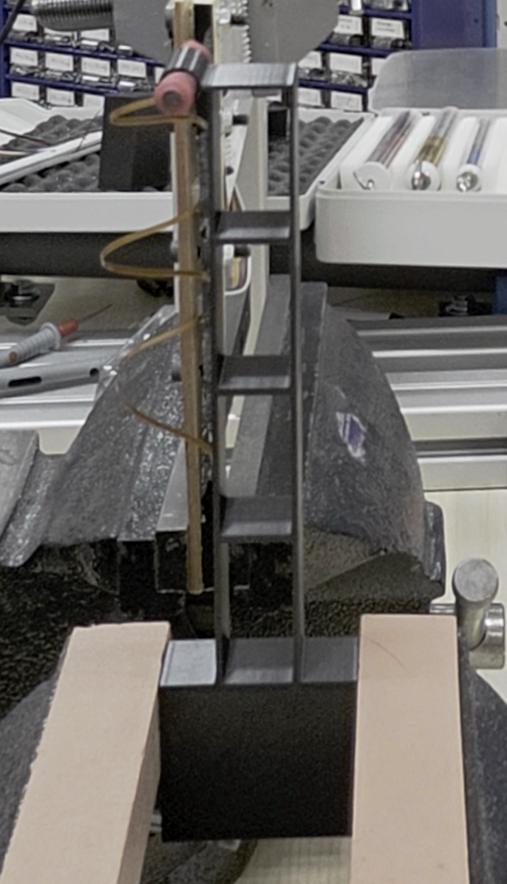}
    \caption{}
  \end{subfigure}
  \caption{Modal testing of the printed frame structure: (a) without non-structural mass, (b) with $20$~g mass at the bottom-right corner.}
  \label{fig:exp_frame_test}
\end{figure}

\section{Conclusions}\label{sec:conclusions}

In this paper, we studied minimum-weight design of undamped Euler--Bernoulli frame structures under single-frequency subresonant harmonic excitation, focusing on dynamic compliance and peak input power in both nominal and ellipsoidal-robust settings. Our main theoretical result is an exact semidefinite-programming reformulation of these response constraints as a free-vibration generalized eigenvalue condition with design-independent mass augmentation. This equivalence unifies static, dynamic, and modal constraints in a single framework and clarifies that the worst-case amplitudes align with the fundamental mode of the augmented eigenproblem. Although the resulting design problem is nonconvex as the structural stiffness depends polynomially on cross-sectional areas, the reformulation enables certified lower bounds via the moment-sum-of-squares hierarchy and construction of feasible upper bounds, yielding explicit optimality guarantees.

Numerical studies on three examples support the theory. For the $10$-segment frame of \citet[Section 5.1]{Ni2014}, we obtain designs with certified global optimality and numerically confirm equivalences among dynamic-compliance, peak-input-power, and free-vibration-eigenvalue formulations. For the larger $35$-segment tower of \citet[Section 5.2]{Ni2014}, we obtained high-quality local solutions together with rigorous bounds; the resulting designs are substantially lighter than the previously reported ones. In an additional case study we obtained a high-quality local solution, manufactured the structure, and experimentally validated the predicted frequency response, observing close agreement with the model.

The proposed pipeline offers three benefits: (i) unified modeling handle through free-vibration eigenvalue constraints which covers static, dynamic-compliance, and peak-input-power constraints; (ii) robustness to load uncertainty is directly embedded in the semidefinite programming machinery; and (iii) certification---often missing in nonconvex topology optimization---is available through convergent relaxations.

The present theory targeted single-frequency, uniform-phase loads below the first resonance without damping; scalability of generic SDP/mSOS relaxations may become costly for large ground-structures. Extensions to deal with these aspects will be considered in future work.

\section*{Acknowledgments}
This work was funded by the Czech Science Foundation (project No. 22-15524S) and, from January 2025, co-funded by the European Union under the ROBOPROX project (reg. no. CZ.02.01.01/00/22 008/000459). The authors would like to thank Pavel Rychnovský for his assistance with the experimental validation.

\section*{Declaration of generative AI and AI-assisted technologies in the writing process}
During the preparation of this work the authors used the ChatGPT service in order to improve the clarity and quality of the writing. After using this tool, the authors reviewed and edited the content as needed and take full responsibility for the content of the published article.

\bibliography{liter.bib}
\bibliographystyle{abbrvnat}

\end{document}